\documentclass{amsart}
\usepackage{hyperref,enumerate}
\usepackage{xcolor}


\newtheorem{theorem}{Theorem}[section]
\newtheorem{lemma}[theorem]{Lemma}
\newtheorem{proposition}[theorem]{Proposition}
\newtheorem{corollary}[theorem]{Corollary}

\theoremstyle{definition}
\newtheorem{definition}[theorem]{Definition}

\theoremstyle{remark}
\newtheorem{remark}[theorem]{Remark}
\numberwithin{equation}{section}

\title[Extremal Metrics for $P_g$]{Extremal metrics for the Paneitz Operator on closed four-Manifolds}

\author{Samuel P\'{e}rez-Ayala}
\address{Department of Mathematics\\
         Princeton University\\
         Princeton, NJ 08544}

\email{\href{sperezayala@princeton.edu}{sperezayala@princeton.edu}}

\begin{document}

\begin{abstract}
Let $(M^4,g)$ be a closed Riemannian manifold of dimension four. We investigate the properties of metrics which are critical points of the eigenvalues of the Paneitz operator when considered as functionals on the space of Riemannian metrics with fixed volume. We prove that critical metrics of the aforementioned functional restricted to conformal classes are associated with a higher-order analog of harmonic maps (known as extrinsic conformal-harmonic maps) into round spheres. This extends to four-manifolds well-known results on closed surfaces relating metrics maximizing laplacian eigenvalues in conformal classes with the existence of harmonic maps into spheres. The case of general critical points (not restricted to conformal classes) is also studied, and partial characterization of these is provided. 
\end{abstract}


\maketitle

\section{Introduction}\label{PaneitzIntro}

Let $M^4$ be a closed (compact, no boundary) four-dimensional Riemannian manifold. In 1983, S.M. Paneitz (\cite{Paneitz}) introduced a fourth-order elliptic operator, nowadays known as the Paneitz operator, acting on arbitrary pseudo-Riemannian manifolds. For a Riemannian metric $g$, it is denoted by $P_g$ and defined on smooth functions by
\begin{equation}\label{P-definition}
    P_g(\phi):= \Delta_g^2(\phi) - \text{div}_g\left(\frac{2}{3}R_g g - 2\text{Ric}_g\right)\nabla_g\phi.
\end{equation}
It was the first higher-order example of a conformally covariant operator. Indeed, for a conformal metric $g_w=e^{2w}g\in [g]$, $P_g$ transforms as 
\begin{equation}\label{ConfTransP}
    P_{g_w}(\phi) = e^{-4w}P_g(\phi).
\end{equation}
By choosing a local orthonormal basis $\{E_1,\cdots,E_4\}$, $P_g$ can be rewritten locally as
\begin{equation}\label{P-local}
    P_g(\phi) = \Delta_g^2(\phi) - \frac{2}{3}\text{div}_g(R_g\nabla_g\phi) + 2\sum_{i=1}^4\text{div}_g(\text{Ric}_g(\nabla_g\phi,E_i)E_i),
\end{equation}
and thus
\[
    \langle P_g(\phi), \psi\rangle_{L^2(M^4,g)} = \int_M\left\{(\Delta_g\phi)(\Delta_g\psi) + \frac{2}{3}R_g\langle\nabla_g\phi,\nabla_g\psi\rangle - 2\text{Ric}_g(\nabla_g\phi,\nabla_g\psi)\right\}\;dv_g.
\]
Finally, since $M^4$ is assumed to be compact and $P_g$ is an elliptic operator, its spectrum forms a discrete sequence of real numbers. We arrange them as follows:
\begin{equation}\label{eigenvaluesP}
    \lambda_1(P_g)\le \lambda_2(P_g) \le \lambda_3(P_g)\le \cdots \le \lambda_k(P_g) \nearrow \infty,
\end{equation}
where each eigenvalue is repeated according to their multiplicities. 

\subsection{Extremal metrics on conformal classes - main results}
There are many conformal invariants associated with $P_g$ that are relevant to our work. As in the case of surfaces with the Laplace-Beltrami operator, the associated energy for the Paneitz operator is conformally invariant, that is, 
\begin{equation}\label{energy}
    \langle P_{g_w}(\phi),\psi\rangle_{L^2(M^4,g_w)}=\langle P_g(\phi), \psi\rangle_{L^2(M^4,g)}
\end{equation}
for all $g_w\in [g]$ as can be checked by using (\ref{ConfTransP}). Another consequence of (\ref{ConfTransP}) is the conformal invariance of the dimension of $\text{ker}(P_g)$. In fact, the following set identity holds: 
\begin{equation}\label{kernelP}
\text{Ker}(P_{g_w}) = \text{Ker}(P_g).
\end{equation}
Finally, the number of negative eigenvalues $N([g])$ of $P_g$ is also an invariant of $[g]$ as shown in \cite{Canzani2}. 

From (\ref{ConfTransP}), we deduce that the eigenvalues of the Paneitz operator scale like $c^{-2}$ when the metric $g$ is multiplied by a positive constant $c$: $\lambda_k(P_{cg}) = c^{-2}\lambda_k(P_g)$. Therefore, in order to study possible extremal metrics for the $k$-th eigenvalue functional associated with $P_g$,
\begin{equation}\label{P-functional2}
    g_w:=e^{2w}g\in[g] \mapsto \lambda_k(P_{g_w}),
\end{equation}
some sort of normalization is required. We will argue that constraining the volume is enough and more natural in this context. To this end, fixed a background metric $g\in [g]$, and set
\begin{equation}\label{C-subset}
C:= C(g) = \left\{g_w\in [g]: \text{Vol}(M^4,g_w)=\text{Vol}(M^4,g)\right\}.
\end{equation}
Extremal metrics for the constrained $k$-th eigenvalue functional
\begin{equation}\label{C-PFunctional}
g_w=e^{2w}g\in C\mapsto \lambda_k(P_{g_w})
\end{equation}
are in correspondence with extremal metrics of the scale-invariant functional
\begin{equation}\label{C-PFunctional3}
g_w\in[g] \mapsto \lambda_k(P_{g_w})\text{Vol}(M^4,g_w).
\end{equation}
We will often refer to either (\ref{C-PFunctional}) or (\ref{C-PFunctional3}) as the \textit{normalized $k$-th eigenvalue functional associated with $P_g$}.

We shall call a conformal metric \textit{conformally extremal for the k-th normalized eigenvalue functional} or \textit{conformally extremal for $\lambda_k$} if it is a critical point of the scale-invariant functional (\ref{C-PFunctional3}) (see Definition \ref{C-extremal}). We remark that, in the case of the Laplace-Beltrami operator on closed surfaces, the terminology \textit{C-extremal} is used by El Soufi-Ilias in \cite{Soufi} to refer to critical points of the constrained eigenvalue functional, while the term \textit{conformally maximal} is used in \cite{Karpukhin2} for global maximizers in conformal classes.

Our results concern the necessary and sufficient conditions for the existence of these special metrics. We start with a discussion of some necessary conditions. 

\begin{theorem}\label{Main1} Let $(M^4,g)$ be a four-dimensional closed Riemannian manifold, and suppose that $g_e\in[g]$ is conformally extremal for $\lambda_k$. If $\lambda_k(P_{g_e})\not = 0$, then there exists a finite family of eigenfunctions $\{\phi_1,\cdots,\phi_p\}$ associated with $\lambda_k(P_{g_e})$ satisfying $\sum_{i=1}^p \phi_i^2 \equiv 1$. Moreover,
\begin{equation}\label{Main1Formula}
\lambda_k(P_{g_e}) = \sum_{i=1}^p \left\{\phi_i\Delta_{g_e}^2\phi_i + \frac{2}{3}R_{g_e}|\nabla_{g_e}\phi_i|^2 - 2\text{Ric}_{g_e}(\nabla_{g_e}\phi_i, \nabla_{g_e}\phi_i)\right\},
\end{equation}
and $\lambda_k(P_{g_e})$ is degenerate.
\end{theorem}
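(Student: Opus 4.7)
The plan is to adapt the variational argument of El Soufi--Ilias \cite{Soufi} for the Laplacian to the Paneitz setting; the decisive simplification is that the conformal invariance of the energy (\ref{energy}) reduces the problem to differentiating an $L^2$-norm against a varying volume form rather than the full quadratic form. Let $V_k$ denote the $\lambda_k(P_{g_e})$-eigenspace. The first step is to compute the one-sided derivatives of $\lambda_k(P_{g_t})$ along paths $g_t = e^{2tv}g_e$ with $\int_M v\,dv_{g_e}=0$ (admissible to first order for $C$). For $\phi\in V_k$ with $\int_M\phi^2\,dv_{g_e}=1$, relation (\ref{energy}) gives
\[
\lambda_k(P_{g_t})\int_M\phi^2 e^{4tv}\,dv_{g_e} = \langle P_{g_t}\phi,\phi\rangle_{L^2(g_t)} = \langle P_{g_e}\phi,\phi\rangle_{L^2(g_e)} = \lambda_k(P_{g_e}),
\]
so a standard perturbation analysis yields eigenvalue branches satisfying $\tfrac{d}{dt}\big|_{t=0}\lambda(t) = -4\lambda_k(P_{g_e})\int_M v\phi^2\,dv_{g_e}$, and the one-sided derivatives of $\lambda_k(P_{g_t})$ are the extremes of this expression over unit eigenfunctions in $V_k$. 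C-extremality forces these one-sided derivatives to have opposite signs for every admissible $v$; since $\lambda_k(P_{g_e})\neq 0$, this gives the key geometric statement: for every $v$ with $\int_M v\,dv_{g_e}=0$, the map $\phi\mapsto\int_M v\phi^2\,dv_{g_e}$ on the unit sphere of $V_k$ attains values of both signs.

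The second step is to extract the family $\{\phi_i\}$ via Hahn--Banach separation. Consider the convex hull $K$ of $\{\phi^2 : \phi\in V_k,\ \|\phi\|_{L^2(g_e)}=1\}$, a compact subset of $L^1(M,dv_{g_e})$. I claim $1\in K$: if not, a separating functional produces $v\in L^\infty(M)$ with $\int_M v\,dv_{g_e} < \int_M v\phi^2\,dv_{g_e}$ for every unit $\phi\in V_k$, and subtracting the constant $\int_M v\,dv_{g_e}$ gives an admissible perturbation on which $\int_M v\phi^2\,dv_{g_e}>0$ uniformly, contradicting the sign-change property from the first step. Since $V_k$ is finite-dimensional, the image $\{\phi^2\}$ lies in the finite-dimensional space of quadratic forms on $V_k$, so Carath\'eodory's theorem yields finitely many $\phi_1,\ldots,\phi_p\in V_k$ and weights $a_i>0$, $\sum_i a_i = 1$, with $\sum_i a_i\phi_i^2\equiv 1$; absorbing $\sqrt{a_i}$ into $\phi_i$ gives the desired family of $\lambda_k$-eigenfunctions.

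Formula (\ref{Main1Formula}) then follows from a pointwise calculation: multiplying $P_{g_e}\phi_i = \lambda_k(P_{g_e})\phi_i$ by $\phi_i$, summing, and using $\sum_i\phi_i\nabla_{g_e}\phi_i = \tfrac12\nabla_{g_e}(\sum_i\phi_i^2)=0$, the divergence terms in (\ref{P-local}) assemble into $\text{div}_{g_e}\!\bigl(S\sum_i\phi_i\nabla_{g_e}\phi_i\bigr)=0$, where $S=\tfrac23 R_{g_e}g_e-2\text{Ric}_{g_e}$, and one is left with exactly the right-hand side of (\ref{Main1Formula}). Degeneracy follows at once: if $\dim V_k = 1$, all $\phi_i$ would be scalar multiples of a single eigenfunction $\phi$, forcing $\phi^2$ to be constant and hence $\phi$ to be a nonzero constant by connectedness of $M$; but $P_{g_e}$ annihilates constants, contradicting $\lambda_k(P_{g_e})\neq 0$. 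The main obstacle is the first step: a careful treatment of one-sided derivatives of the $k$-th eigenvalue of a fourth-order operator through a degenerate point, together with the justification that the $O(t^2)$ correction needed to keep $g_t$ exactly in $C$ does not alter the first-order derivative. Once this perturbation theory is in place, the convex-analytic and algebraic steps above go through without further difficulty.
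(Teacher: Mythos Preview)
Your proposal is correct and follows essentially the same route as the paper: perturbation theory for the one-sided derivatives (the paper's Propositions~\ref{RKC}--\ref{OSD}), Hahn--Banach separation of the constant $1$ from the convex hull of $\{\phi^2\}$, and Carath\'eodory to extract a finite family. Your stated obstacle about staying in $C$ is handled in the paper by using the exactly volume-normalized analytic perturbation $g(t)=\mathrm{Vol}(M^4,e^{t\tilde f}g_e)^{-1/2}e^{t\tilde f}g_e$, and your derivation of (\ref{Main1Formula})---multiplying the eigenequation by $\phi_i$, summing, and observing that $\sum_i\phi_i\nabla_{g_e}\phi_i=0$ kills the divergence terms---is actually more direct than the paper's route via a Leibniz rule for $P_{g_e}(\phi^2)$ (Lemma~\ref{P-LR}).
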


Let us make a few remarks about what the extremal condition $\sum_{i=1}^p\phi^2_i = 1$ means. Suppose we consider the functional $g_w\in [g] \mapsto$ $\lambda_k(-\Delta_{g_w})\text{Area}(M^2,g_w)$ on any closed surface for $k\ge 2$, where $\lambda_k(-\Delta_{g_w})$ denotes the $k$-th eigenvalue of the Laplace-Beltrami operator\footnote{In most works dealing with laplacian eigenvalues, it is common to denote the first laplacian eigenvalue by $\lambda_0(-\Delta_g)$ as it is always zero. We do not adopt this convention here: to stay consistent with our notation, here $\lambda_k(-\Delta_g)$ denotes the actual $k$-th eigenvalue of $-\Delta_g$.}. In this case, if $g_{e}\in [g]$ is conformally extremal, then there exists a collection $\{\phi_1,\cdots,\phi_p\}$ of eigenfunctions associated with $\lambda_k(-\Delta_{g_e})$ such that $\sum_{i=1}^p\phi_i^2=1$ (see Theorem 4.1 (i) in \cite{Soufi2}). This means that $\Phi = (\phi_1,\cdots, \phi_p): (M^2,g_e) \rightarrow \mathbb{S}^{p-1}$ is a harmonic map with constant energy density satisfying $2e_{g_e}(\Phi):= |\nabla_{g_e} \Phi|^2=\lambda_k(-\Delta_{g_{e}}) $. Moreover, the multiplicity of $\lambda_k(-\Delta_{g_{e}})$ is at least $2$, that is, $\lambda_k(-\Delta_{g_{e}})$ is degenerate. These types of results were first observed by Nadirashvili in \cite{Nadirashvili3}.

Theorem \ref{Main1} is an extension of this result to the case of the Paneitz operator on closed four-manifolds. In what follows, we explain the meaning of the map $\Phi = (\phi_1,\cdots, \phi_p):(M^4,g_e) \rightarrow \mathbb{S}^{p-1}$ in our setting. Let $N$ be a closed Riemannian manifold that is isometrically embedded into $\mathbb{R}^n$ for some $n$ sufficiently large, and consider the Sobolev space
\begin{equation}
    W^{2,2}(M^4,N):= \{U\in W^{2,2}(M^4,\mathbb{R}^n): U(x)\in N \text{ a.e. }x\in M\}
\end{equation}
In Angelsberg's work \cite{Angelsberg}, the author introduced a fourth-order energy functional 
\[\mathbb{P}_4:W^{2,2}(M^4,N)\rightarrow \mathbb{R}
\] 
defined by 
\begin{equation}
\begin{split}
    \mathbb{P}_4[U]& := \int_M\left\{|\Delta_g U|^2 + \frac{2}{3}R_g|\nabla_gU|^2 -2\text{Ric}_g(\nabla_gU,\nabla_gU) \right\}\;dv_g \\ &= \sum_{i=1}^n\int_M\left\{(\Delta_g U_i)^2 + \frac{2}{3}R_g|\nabla_gU_i|^2 -2\text{Ric}_g(\nabla_gU_i,\nabla_gU_i) \right\}\;dv_g.
\end{split}
\end{equation}
Critical points with respect to compactly supported variations in the target manifold $N$ corresponds to solutions of the system of partial differential equations
\begin{equation}\label{PaneitzEq}
    P_g(U)\perp T_UN
\end{equation}
in the sense of distributions; see Section 1.4 in \cite{Angelsberg}. This type of system of nonlinear PDEs (in the intrinsic case) have been studied before and its solutions are known as \textit{conformal-harmonic maps}; see the work by B\'erard, Biquard-Madani, and Lin-Zhu in \cite{Berard, Biquard, Lin}, respectively, and references therein for more details. We will adopt this terminology for solutions of (\ref{PaneitzEq}).

If $N = \mathbb{S}^{p-1}$, so that $U$ is a sphere-valued map, equation (\ref{PaneitzEq}) becomes
\begin{equation}\label{PaneitzEq2}
P_g(U) = e_g(U)U,
\end{equation}
where $e_g$ denotes the density function
\begin{equation}
e_g(U) = \sum_{i=1}^p\left\{U_i\Delta_g^2 U_i + \frac{2}{3}R_g |\nabla_g U_i|^2 - 2\text{Ric}_g(\nabla_g U_i, \nabla_g U_i)\right\}.
\end{equation}
Now, consider the collection of eigenfunctions $\{\phi_1,\cdots,\phi_p\}$ associated with $\lambda_k(P_{g_e})$ provided by Theorem \ref{Main1}, and define the map $\Phi:M^4\rightarrow \mathbb{S}^{p-1}$ by setting $\Phi:=(\phi_1,\cdots,\phi_p)$. Note that the condition $\sum_{i=1}^p\phi^2=1$ implies that $\Phi$ is a well-defined map into $\mathbb{S}^{p-1}$. From (\ref{Main1Formula}) in Theorem \ref{Main1}, we then have
\begin{equation}
    P_{g_e}(\Phi) = \lambda_k(P_{g_e}) \Phi \;\;\;\text{and}\;\;\; e_{g_e}(\Phi) = \lambda_k(P_{g_e}),
\end{equation}
which means that $\Phi\in W^{2,2}(M^4,\mathbb{S}^{p-1})$ solves (\ref{PaneitzEq2}) and it has constant density $e_{g_e}(\Phi)$ equal to $\lambda_k(P_{g_e})$. Therefore, if $g_e$ is a conformally extremal metric as in Theorem \ref{Main1}, then $(M^4,g_e)$ admits a conformal-harmonic map into a sphere. The next corollary summarizes our discussion. 

\begin{corollary}\label{Cor-Main1}
Suppose we are under the same assumptions as in Theorem \ref{Main1}. Then the map $\Phi=(\phi_1,\cdots,\phi_p): (M^4,g_e)\rightarrow \mathbb{S}^{p-1}$ is a conformal-harmonic map. 
\end{corollary}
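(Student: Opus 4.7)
The plan is to observe that Corollary \ref{Cor-Main1} is essentially a repackaging of Theorem \ref{Main1} in the language introduced by Angelsberg for sphere-valued Paneitz maps, so the proof reduces to three short verifications.

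First, I would check that $\Phi=(\phi_1,\ldots,\phi_p)$ is a well-defined element of $W^{2,2}(M^4,\mathbb{S}^{p-1})$. This is immediate from Theorem \ref{Main1}: the constraint $\sum_{i=1}^p\phi_i^2\equiv 1$ places the image of $\Phi$ inside the unit sphere of $\mathbb{R}^p$, and each component $\phi_i$, being an eigenfunction of the elliptic operator $P_{g_e}$, is smooth and thus certainly in $W^{2,2}$.

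Second, I would apply $P_{g_e}$ componentwise. Since each $\phi_i$ is a $\lambda_k(P_{g_e})$-eigenfunction, one gets $P_{g_e}(\Phi)=\lambda_k(P_{g_e})\,\Phi$. Then, using formula \eqref{Main1Formula} together with the definition of $e_g$, the density satisfies $e_{g_e}(\Phi)=\lambda_k(P_{g_e})$ pointwise on $M^4$. Combining the two identities yields
\begin{equation*}
P_{g_e}(\Phi)=e_{g_e}(\Phi)\,\Phi,
\end{equation*}
which is exactly the sphere-valued Paneitz equation \eqref{PaneitzEq2}. Equivalently, $P_{g_e}(\Phi)$ is everywhere normal to $T_{\Phi}\mathbb{S}^{p-1}$, so $\Phi$ solves \eqref{PaneitzEq} in the sense of distributions and therefore qualifies as a Paneitz map into $(\mathbb{S}^{p-1},g_r)$.

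I do not expect any technical obstacle: all of the substantive analytic work, in particular producing the family $\{\phi_1,\ldots,\phi_p\}$, establishing $\sum_{i=1}^p\phi_i^2\equiv 1$, and identifying the constant energy density with $\lambda_k(P_{g_e})$, has already been carried out in Theorem \ref{Main1}. The corollary is then simply the reinterpretation of that conclusion through the Angelsberg variational framework.
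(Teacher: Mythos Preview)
Your proposal is correct and mirrors the paper's own argument, which appears as the discussion immediately preceding the corollary: use $\sum_i\phi_i^2\equiv 1$ to see that $\Phi$ lands in $\mathbb{S}^{p-1}$, apply $P_{g_e}$ componentwise to get $P_{g_e}(\Phi)=\lambda_k(P_{g_e})\Phi$, and invoke \eqref{Main1Formula} to identify $e_{g_e}(\Phi)=\lambda_k(P_{g_e})$, thereby verifying \eqref{PaneitzEq2}. There is nothing to add.
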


The next theorem provides a partial converse to Theorem \ref{Main1} in two ways. Similar results have been proved for the case of the Laplace-Beltrami operator; see Theorem 2.2 in \cite{Soufi} and Theorem 2.3 in \cite{Karpukhin4}. 

\begin{theorem}\label{Main2}
Let $(M^4,g)$ be a four-dimensional closed Riemannian manifold. 
\begin{enumerate}[(i)]
\item Assume there is a metric $g_e\in [g]$ such that $\lambda_k(P_{g_e})\not = 0$ and for which either $\lambda_k(P_{g_e})>\lambda_{k-1}(P_{g_e})$ or $\lambda_k(P_{g_e})<\lambda_{k+1}(P_{g_e})$ holds. If there exists a family of eigenfunctions $\{\phi_1,\cdots,\phi_p\}$ associated with $\lambda_k(P_{g_e})$ such that $\sum_{i=1}^p \phi_i^2$ is constant, then $g_e$ is conformally extremal for $\lambda_k$.

\item Assume there is a smooth conformal-harmonic map $\Phi:(M^4, g) \rightarrow (\mathbb{S}^{p-1},g_r)$ with energy density satisfying either $e_g(\Phi)>0$ or $e_g(\Phi)<0$ pointwise on $M^4$. Then there exists a smooth metric $g_\Phi$ in $[g]$ such that $g_\Phi$ is conformally extremal for $\lambda_k$ for some $k$, and $e_{g_\Phi}(\Phi) = \lambda_k(P_{g_\Phi})$. 
\end{enumerate}
\end{theorem}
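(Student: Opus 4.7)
\textbf{Proof proposal for Theorem \ref{Main2}.}

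For (i), the plan is to extend to the Paneitz operator the min--max one-sided derivative calculation of El~Soufi--Ilias for the Laplacian (\cite{Soufi2}). Any smooth path in $C$ through $g_e$ has, to first order, the form $g_t = e^{2tw+o(t)}g_e$ with $w\in C^\infty(M^4)$ satisfying the admissibility condition $\int_M w\,dv_{g_e} = 0$ imposed by the unit-volume constraint. Since the Paneitz quadratic form is conformally invariant while $L^2$ scales by $e^{4tw}$, the Rayleigh quotient of a fixed unit-norm $\phi\in E_k(P_{g_e})$ satisfies
\[
\mathcal{R}_{g_t}(\phi) = \frac{\lambda_k(P_{g_e})}{1 + 4t\int_M w\phi^2\,dv_{g_e} + o(t)}.
\]
Combined with the one-sided gap hypothesis, Kato--Rellich perturbation theory for the isolated spectral cluster at $\lambda_k(P_{g_e})$ together with the Courant--Fischer characterization yields one-sided derivatives of $\lambda_k(P_{g_t})$ at $t=0$ that equal the minimum and maximum over unit $\phi\in E_k(P_{g_e})$ of the quadratic form
\[
q_w(\phi) := -4\lambda_k(P_{g_e}) \int_M w\phi^2\,dv_{g_e}.
\]
In the El~Soufi--Ilias framework, $g_e$ is C-extremal precisely when for every admissible $w$ one has $\min q_w \le 0 \le \max q_w$ on the unit sphere of $E_k(P_{g_e})$. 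Given $\{\phi_1,\dots,\phi_p\}\subset E_k(P_{g_e})$ with $\sum_i\phi_i^2\equiv c$ constant, summing and using admissibility gives
\[
\sum_{i=1}^p q_w(\phi_i) = -4c\,\lambda_k(P_{g_e}) \int_M w\,dv_{g_e} = 0,
\]
so at least one summand is $\le 0$ and at least one is $\ge 0$. Since $q_w$ is quadratically homogeneous, these signs persist after renormalization, giving $\min q_w\le 0\le \max q_w$ and hence C-extremality.

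For (ii), the plan is to choose a conformal factor so that the components of $\Phi$ become eigenfunctions of the rescaled Paneitz operator sharing a common positive eigenvalue, and then invoke part (i). Define
\[
\mu := \int_M e_g(\Phi)\,dv_g, \qquad e^{4w_\Phi} := \frac{e_g(\Phi)}{\mu}, \qquad g_\Phi := e^{2w_\Phi} g.
\]
The positivity assumption $e_g(\Phi)>0$ pointwise ensures $\mu>0$ and $w_\Phi$ is smooth; the normalization gives $g_\Phi\in C(g)$. Using the conformal covariance (\ref{ConfTransP}) and the Paneitz map equation (\ref{PaneitzEq2}), each component $\phi_i$ of $\Phi$ satisfies
\[
P_{g_\Phi}(\phi_i) = e^{-4w_\Phi} P_g(\phi_i) = \frac{\mu}{e_g(\Phi)}\cdot e_g(\Phi)\phi_i = \mu\,\phi_i,
\]
so the $\phi_i$ are eigenfunctions of $P_{g_\Phi}$ with common positive eigenvalue $\mu$, and $\sum_i\phi_i^2\equiv 1$. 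Choosing $k$ maximal with $\lambda_k(P_{g_\Phi})=\mu$ (possible since the spectrum is discrete and unbounded) yields $\lambda_k(P_{g_\Phi})<\lambda_{k+1}(P_{g_\Phi})$, so part (i) applies and gives C-extremality of $g_\Phi$ for this $\lambda_k$.

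The main technical obstacle is the rigorous justification of the one-sided derivative formula for $\lambda_k(P_{g_t})$ in the degenerate case, which requires a careful treatment of the spectral projection onto the eigencluster via perturbation theory; the one-sided gap hypothesis is essential to isolate this cluster. Once that step is in place, both parts reduce to short algebraic arguments: a pointwise sum-of-squares identity for (i) and a conformal covariance computation for (ii).
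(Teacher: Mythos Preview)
Your proposal is correct and follows essentially the same route as the paper. Part (i) is exactly the paper's argument: the one-sided derivatives of $\lambda_k$ along a volume-preserving analytic path are the extreme values of the quadratic form $q_w$ on $E_k(P_{g_e})$ (this is the content of the paper's Proposition~\ref{OSD}, derived via Rellich--Kato--Canzani perturbation theory), and the identity $\sum_i\phi_i^2=\text{const}$ together with $\int_M w\,dv_{g_e}=0$ forces $\sum_i q_w(\phi_i)=0$, hence $\min q_w\le 0\le\max q_w$. For part (ii) the paper sets $g_\Phi=e_g(\Phi)^{1/2}g$ (without your volume normalization) so that each component becomes a $P_{g_\Phi}$-eigenfunction with eigenvalue $1$, and chooses $k$ minimal with $\lambda_k(P_{g_\Phi})=1$ (so $\lambda_{k-1}<\lambda_k$) rather than maximal; these are cosmetic differences, and your normalized version with eigenvalue $\mu$ and $k$ chosen maximal is equally valid.
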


In the case of the Laplace operator on closed surfaces, the existence of maximal metrics in conformal classes has been shown; see for instance the work of Petrides and 
Karpukhin-Nadirashvili-Penskoi-Polterovich in \cite{Petrides1,Petrides2, Karpukhin2}. In their work, the connection with harmonic maps is used, and the regularity of maximal metrics is understood in light of the regularity of harmonic maps. Because of Theorem \ref{Main2}, part (ii), we expect the connection with conformal-harmonic maps to be relevant in understanding the general existence and regularity theory for maximal metrics for the Paneitz operator. A relevant question to us, for instance, is how large can be the zero set of the energy density. 

In the work of Ammann-Jammes \cite{Ammann}, it is shown that for conformally covariant operators (e.g. GJMS operators) whose order is strictly less than the dimension of the manifold, the supremum of positive normalized eigenvalues in conformal classes is always infinity (see Theorem 1.3 in \cite{Ammann} for precise statement). However, Ammann-Jammes' result does not apply to our case as the Paneitz operator is of fourth order and we are on a four-dimensional manifold. In fact, we have the following result:

\begin{theorem}\label{Main3}
Let $M^4$ be a four-dimensional closed Riemannian manifold equipped with a conformal class $[g]$. Then there exists a constant $A=A([g])>0$, depending only on the conformal class of $g$, such that
\begin{equation}\label{5.3conclusion}
\sup_{g_w\in [g]}\lambda_k(P_{g_w})\text{Vol}(M^4,g_w)\le A(k+1) <\infty
\end{equation}
for each $k\in\mathbb{N}$. 
\end{theorem}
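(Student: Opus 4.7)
The plan is to adapt the Grigor'yan--Netrusov--Yau packing argument (in the spirit of Korevaar's classical treatment of the Laplace--Beltrami operator on surfaces) to the Paneitz setting, exploiting the conformal invariance of the Paneitz energy (\ref{energy}) together with the critical scaling of the $W^{2,2}$-energy in dimension four. Dimension four plays here the role that dimension two plays for the Dirichlet energy: the integral $\int(\Delta_g\phi)^2\,dv_g$ is scale-invariant, so radial logarithmic cutoffs have uniformly bounded Paneitz energy, and this is exactly what fails in the settings covered by the Ammann--Jammes divergence result mentioned above.

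First I would reduce to the case $\lambda_k(P_{g_w})>0$. Since $N([g])$ and $\dim\text{Ker}(P_g)$ are conformal invariants, whenever $k\le N([g])+\dim\text{Ker}(P_g)$ one has $\lambda_k(P_{g_w})\le 0$ for every $g_w\in[g]$, so $\lambda_k(P_{g_w})\text{Vol}(M^4,g_w)\le 0$ and the supremum is trivially finite in this range. In the remaining range, the scaling identity $\lambda_k(P_{cg})=c^{-2}\lambda_k(P_g)$ immediately implies the left-hand equality in (\ref{5.3conclusion}) and reduces the boundedness claim to showing $\sup_{g_w\in C}\lambda_k(P_{g_w})<\infty$.

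Next, fix a reference metric $g\in[g]$ of unit volume. For any $g_w=e^{2w}g\in C$, the volume form $d\mu_w=e^{4w}dv_g$ is a probability measure on $M^4$. A Grigor'yan--Netrusov--Yau type packing lemma applied to $(M,g,\mu_w)$ yields $k$ pairwise disjoint geodesic annuli $A_i=B_g(x_i,2r_i)\setminus B_g(x_i,r_i)$, with the dilated balls $B_g(x_i,4r_i)$ still pairwise disjoint, satisfying $\mu_w(A_i)\ge c_0/k$ for a constant $c_0=c_0(M,g)>0$ independent of $w$ and $k$. On each $A_i$ I build a radial logarithmic cutoff $\phi_i$, equal to one on $A_i$ and supported in $B_g(x_i,4r_i)\setminus B_g(x_i,r_i/2)$. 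The critical scaling in dimension four gives a uniform bound $\|\phi_i\|_{W^{2,2}(g)}\le C$ independent of $r_i$, and the smoothness of the coefficients of $P_g$ on the compact manifold then delivers a uniform Paneitz-energy bound $\langle P_g\phi_i,\phi_i\rangle_{L^2(g)}\le C_1=C_1(M,[g])$. Since the supports are pairwise disjoint and $P_g$ is a local operator, also $\langle P_g\phi_i,\phi_j\rangle_{L^2(g)}=0$ for $i\ne j$.

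For $\phi=\sum_i c_i\phi_i$, the conformal invariance (\ref{energy}) and the inequality $\int_M\phi_i^2\,dv_{g_w}\ge\mu_w(A_i)$ yield
\[
\frac{\langle P_{g_w}\phi,\phi\rangle_{L^2(g_w)}}{\|\phi\|_{L^2(g_w)}^2}=\frac{\sum_i c_i^2\,\langle P_g\phi_i,\phi_i\rangle_{L^2(g)}}{\sum_i c_i^2\int_M\phi_i^2\,dv_{g_w}}\le\max_i\frac{C_1}{\mu_w(A_i)}\le\frac{C_1\,k}{c_0}.
\]
Applying the Courant--Fischer min-max to $V_k=\text{span}\{\phi_1,\ldots,\phi_k\}$ then gives $\lambda_k(P_{g_w})\le(C_1/c_0)\,k$ uniformly for $g_w\in C$, proving (\ref{5.3conclusion}). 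The main obstacle is the simultaneous construction of annuli carrying enough $\mu_w$-mass while admitting cutoffs with bounded Paneitz energy; this delicate interplay between the fixed reference geometry (where the cutoffs live) and the varying conformal factor (where $L^2$-masses are measured) is the content of the packing lemma, and its success here relies essentially on being in the critical dimension four, consistent with the Ammann--Jammes theorem cited in the paper.
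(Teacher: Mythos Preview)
Your proposal is correct and follows essentially the same strategy as the paper: apply a Grigor'yan--Netrusov--Yau/Hassannezhad packing lemma to the metric-measure space $(M^4,d_{g_o},dv_{g_w})$ to produce disjoint annuli carrying a definite fraction of the $g_w$-volume, build cutoff test functions supported in the doubled annuli whose Paneitz energy is uniformly bounded (using the critical $W^{2,2}$ scaling in dimension four), and feed the resulting $k$-dimensional subspace into the min-max characterization~(\ref{min-maxP}) via the conformal invariance~(\ref{energy}). The only cosmetic discrepancy is that the packing lemma in its standard form (Lemma~\ref{Asma}) produces annuli $A(x_i,r_i,R_i)$ with arbitrary radii rather than your fixed-ratio annuli $B(x_i,2r_i)\setminus B(x_i,r_i)$, and the paper uses ordinary cutoffs with $|\nabla\phi|=O(r^{-1})$, $|\Delta\phi|=O(r^{-2})$ rather than logarithmic ones---but these do not affect the argument.
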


As for surfaces, Theorem \ref{Main3} argues that it makes sense to seek conformally extremal metrics by maximizing the normalized $k$-th eigenvalue functional over $[g]$. Theorem \ref{Main3} is analogous to results due to Korevaar in \cite{Korevaar} for the Laplace operator in any dimension (see also \cite{Hassannezhad}). It is important to remark that on closed surfaces there are topological upper bounds for the normalized eigenvalue functional associated with $-\Delta_g$, as shown by Yang-Yau \cite{Yang} for the first non-trivial eigenvalue, and by Korevaar \cite{Korevaar} for all eigenvalues on orientable surfaces; see also \cite{Karpukhin3} for the case of non-orientable surfaces. Topological upper bounds for laplacian eigenvalues are no longer possible on closed manifolds of dimension at least three (\cite{Colbois2}). We will return to this point in section \ref{E-MainResults}.

We finish our discussion on conformally extremal metrics with an overview of some specific conformal classes. Similar to the case of closed surfaces, e.g. Corollary 2.1 in \cite{Soufi}, part (i) of Theorem \ref{Main2} allows us to provide examples of conformally extremal metrics. 

\begin{corollary}\label{Homogeneous}
Let $(M^4,g)$ be a homogeneous four-dimensional closed Riemannian manifold. The metric $g$ is conformally extremal for any eigenvalue for which $\lambda_k(P_g)\not = 0$, and for which either $\lambda_k(P_g)>\lambda_{k-1}(P_g)$ or $\lambda_k(P_g) < \lambda_{k+1}(P_g)$ hold.
\end{corollary}

There are only a few examples available of conformally maximal metrics. Due to results by Xu-Yang in \cite{Xu}, the standard metric on $\mathbb{S}_{\frac{1}{\sqrt{2}}}^2\times \mathbb{S}_{\frac{1}{\sqrt{2}}}^2$ is a global maximizer in its conformal class; see Theorem 3.4 there. Another example of a conformally maximal metric is the case of the sphere $\mathbb{S}^4$ equipped with the standard round metric $g_r$. By a result of Gursky in \cite{Gursky} (see also \cite{Gursky2}), we know that $P_{g_r}\ge 0$ and $\text{Ker}(P_{g_r}) = \{\text{constants}\}$. In fact, here $P_{g_r}$ takes the much simpler form 
\begin{equation}\label{PSphere}
    P_{g_r} = (-\Delta_{g_r})\circ (-\Delta_{g_r} + 2) = \Delta_{g_r}^2 - 2\Delta_{g_r}.
\end{equation}
As discussed in \cite{Chang}, a similar formula holds for Einstein metrics.

In the case of the Laplace-Beltrami operator on the $2$-sphere, the metric with the biggest possible first non-zero eigenvalue among all metrics with fixed area equal to $\omega_2=4\pi$ is the round metric. This is due to Hersch (\cite{Hersch}). Our next result generalizes Hersch's result to the four-dimensional round sphere. This result has also been obtained by Gonz\'alez-S\'aez in \cite{Gonzalez} via similar techniques.
\begin{theorem}\label{Main4}
Consider the round sphere $(\mathbb{S}^4,g_r)$, and let $\omega_4=\text{Vol}(\mathbb{S}^4,g_r)$. For any $g_w=e^{2w}g_r\in [g_r]$ satisfying $\text{Vol}(\mathbb{S}^4,g_w) = \omega_4$, we have
\begin{equation}
\lambda_2(P_{g_w})\le 24,
\end{equation}
with equality if and only if $g_w$ is isometric to $g_r$. In particular,
\begin{equation}
\sup_{g_w\in [g_r]} \lambda_2(P_{g_w})\text{Vol}(\mathbb{S}^4,g_w) =\lambda_2(P_{g_r})\omega_4 = 24\omega_4.
\end{equation} 
\end{theorem}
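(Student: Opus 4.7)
The approach I would take is the classical Hersch argument, adapted to the fourth-order Paneitz setting. The plan is to use the five ambient coordinate functions on $\mathbb{S}^4 \subset \mathbb{R}^5$, precomposed with a carefully chosen Möbius transformation, as test functions in the variational characterization of $\lambda_2(P_{g_w})$, and to exploit the identity $\sum_{i=1}^5 x_i^2 \equiv 1$ together with the conformal invariance (\ref{energy}) of the Paneitz energy.

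I would first identify the spectral data on $(\mathbb{S}^4, g_r)$. Since $-\Delta_{g_r} x_i = 4 x_i$ for the degree-one spherical harmonics, the factorization (\ref{PSphere}) gives $P_{g_r} x_i = 4 \cdot 6 \cdot x_i = 24\, x_i$. Combined with Gursky's result that $\text{Ker}(P_{g_r}) = \{\text{constants}\}$, this pins down $\lambda_2(P_{g_r}) = 24$ with a $5$-dimensional eigenspace. Next I would execute the Hersch balancing. For each $\xi$ in the open unit ball $B^5 \subset \mathbb{R}^5$, let $\varphi_\xi \in \mathrm{Conf}(\mathbb{S}^4)$ be the standard conformal dilation with axis through $\xi$ and $\varphi_0 = \mathrm{Id}$, and consider
\[
    F(\xi) := \frac{1}{\omega_4}\int_{\mathbb{S}^4}(x\circ \varphi_\xi)\,dv_{g_w} \in \mathbb{R}^5.
\]
The map $F$ extends continuously to $\overline{B^5}$ with $F|_{\partial B^5} = \mathrm{Id}$ (since the $\varphi_\xi$-pushforward of $dv_{g_w}$ concentrates near $\xi/|\xi|$ as $|\xi|\to 1^-$), so a degree argument produces some $\xi_0 \in B^5$ with $F(\xi_0) = 0$. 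Setting $\tilde x_i := x_i \circ \varphi_{\xi_0}$ ensures $\int \tilde x_i\, dv_{g_w} = 0$ for each $i$.

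For the main inequality, I would invoke the variational characterization of $\lambda_2(P_{g_w})$ (recalling that $\text{Ker}(P_{g_w}) = \{\text{constants}\}$ by (\ref{kernelP})) applied to each $\tilde x_i$:
\[
    \lambda_2(P_{g_w}) \int_{\mathbb{S}^4} \tilde x_i^2\, dv_{g_w} \le \int_{\mathbb{S}^4} \tilde x_i\, P_{g_w}(\tilde x_i)\, dv_{g_w}.
\]
Using (\ref{energy}) to pass to the $g_r$-energy and then diffeomorphism invariance under $\varphi_{\xi_0}$, the right-hand side simplifies to $\int x_i P_{g_r}(x_i)\, dv_{g_r} = 24\int x_i^2\, dv_{g_r}$. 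Summing over $i$ and using $\sum \tilde x_i^2 \equiv 1 \equiv \sum x_i^2$ yields $\lambda_2(P_{g_w})\, \omega_4 \le 24\, \omega_4$. For equality, each $\tilde x_i$ must saturate the Rayleigh quotient and hence satisfy $P_{g_w}(\tilde x_i) = 24\, \tilde x_i$. Comparing this with the naturality identity $P_{\varphi_{\xi_0}^* g_r}(\tilde x_i) = 24\, \tilde x_i$ and applying conformal covariance (\ref{ConfTransP}) to both, I would obtain $(e^{-4w}-e^{-4v}) P_{g_r}(\tilde x_i) = 0$ pointwise, where $g_w = e^{2w}g_r$ and $\varphi_{\xi_0}^* g_r = e^{2v} g_r$; equivalently $\tilde x_i (e^{4(v-w)}-1)=0$. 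The constraint $\sum \tilde x_i^2 = 1$ then rules out the simultaneous vanishing of all $\tilde x_i$ at any point, forcing $w \equiv v$ and hence $g_w = \varphi_{\xi_0}^* g_r$.

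The main obstacle is the Hersch balancing: producing the correct Möbius transformation requires justifying the continuous boundary extension of $F$ (a weak-convergence-of-measures argument for the Möbius dilations) together with a degree-theoretic/Brouwer fixed-point argument. Once that balance is in place, the remainder of the proof is essentially forced by the conformal invariance of the Paneitz energy and the clean numerology of the ambient coordinates satisfying $\sum_i x_i^2 \equiv 1$.
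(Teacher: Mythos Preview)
Your proposal is correct and follows essentially the same Hersch-type argument as the paper. The only cosmetic difference is that the paper pulls back the metric to $\varphi^*g_w$ and tests with the bare coordinates $x_i$, whereas you keep $g_w$ fixed and test with $\tilde x_i = x_i\circ\varphi_{\xi_0}$; these two viewpoints are equivalent by diffeomorphism invariance, and your equality analysis (forcing $w\equiv v$ from $\sum \tilde x_i^2=1$) matches the paper's conclusion that $e^{4w_\varphi}\equiv 1$.
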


In the case of the Laplace-Beltrami operator on $(\mathbb{S}^2,g_r)$, Karpukhin-Nadirashvili-Penskoi-Polterovich proved in \cite{Karpukhin} that $g_r$ does \textit{not} maximize any higher-order eigenvalue. In fact, the normalized $k$-th eigenvalue functional on $\mathbb{S}^2$, $k\ge 3$, is attained only in the limit by the union of $k-1$ identical touching spheres; see Theorem 1.2 in \cite{Karpukhin}. Whether or not the same occurs on $(\mathbb{S}^4,g_r)$ with the Paneitz operator is still unknown. However, following the ideas of \cite{Petrides3, Nadirashvili4, Kim}, we prove the following Hersch's type inequality for the third eigenvalue on $\mathbb{S}^4$:
\begin{theorem}\label{third-eigen}
For any metric $g_w\in [g_r]$ with $\text{Vol}(\mathbb{S}^4,g_w)=\omega_4$, we have 
\begin{equation}\label{third-eigen1}
\lambda_3(P_{g_w})< 2\cdot 24.
\end{equation}
In particular, 
\begin{equation}\label{third-eigen2}
\sup_{g_w\in [g_r]}\lambda_3(P_{g_w})\text{Vol}(\mathbb{S}^4,g_w) \le 2\cdot 24\omega_4.
\end{equation}
\end{theorem}
Similar to the case of $\mathbb{S}^2$ for laplacian eigenvalues, we expect the equality in \ref{third-eigen2} to be attained by a sequence of conformal metrics degenerating to the disconnected union of two standard spheres. This would imply the non-existence of smooth maximizers for the normalized third eigenvalue functional on $(\mathbb{S}^4,g_r)$.

\subsection{Extremal metrics on $\mathcal{R}(M^4)$ - main results}\label{E-MainResults} We use $\mathcal{R}(M^n)$ to denote the space of all Riemannian metrics on the closed manifold $M^n$ of dimension $n$. We shall call a metric $g$ \textit{extremal for the normalized k-th eigenvalue functional} or \textit{extremal for $\lambda_k$} if it is a critical point for $\bar g\in\mathcal{R}(M^4)\mapsto \lambda_k(P_{\bar g})\text{Vol}(M^4,\bar g)$ (see Definition \ref{Extremal}).  As mentioned, on closed surfaces there are topological upper bounds for the normalized eigenvalue functional. We do not know yet if the analogous result holds for the Paneitz operator, and it would be interesting to investigate whether that is the case. However, properties of extremal metrics can still be studied. 

In the case of closed surfaces, critical points of the normalized eigenvalue functional $g\in \mathcal{R}(M^2)\mapsto \lambda_k(-\Delta_g)\text{Area}(M^2,g)$ over the space of all Riemannian metrics are associated with the existence of isometric minimal immersions into spheres. Indeed, if $g$ is extremal for the $k$-th normalized eigenvalue functional, then there exists a collection $\{\phi_1,\cdots,\phi_p\}$ of eigenfunctions associated with $\lambda_k(-\Delta_g)$ such that $\sum_{i=1}^p\phi_i^2 = \frac{2}{\lambda_k(-\Delta_g)}$ is constant and $d\phi_i\otimes d\phi_i = g$. This condition is almost sufficient for the existence of extremal metrics (see Theorem 3.1 in \cite{Soufi2} for precise statement). As realized by Takahashi in \cite{Takahashi}, it turns out that $\sum_{i=1}^pd\phi_i\otimes d\phi_i = g$ means that the map $\Phi = (\phi_1,\cdots,\phi_p): (M^2,g)\rightarrow \mathbb{S}^{p-1}\left(\sqrt{\frac{2}{\lambda_k(-\Delta_g)}}\right)$ is an isometric immersion whose image is minimal. 

To understand our results, let us point out that the condition $\sum_{i=1}^pd\phi_i\otimes d\phi_i = g$ can be understood in terms of the vanishing of the covariant two-tensor $s_g(\phi):= d\phi\otimes d\phi - \frac{1}{2}|\nabla_g\phi|^2g$, known as the \textit{stress-energy tensor}. In this language, if $g$ is extremal for the $k$-th normalized eigenvalue functional, then one can find a collection $\{\phi_1,\cdots,\phi_p\}$ of eigenfunctions associated with $\lambda_k(-\Delta_g)$ such that $\sum_{i=1}^ps_g(\phi_i)=0$; see proof of Proposition 2.1 in \cite{Fraser}. 

Our main result concerning extremal metrics for the Paneitz operator over $\mathcal{R}(M^4)$ is an analog of this result. Theorem \ref{E-Main} below argues that extremal metrics for the normalized $k$-th eigenvalue functional $g\in\mathcal{R}(M^4)\mapsto \lambda_k(P_g) \text{Vol}(M^4,g)$ can be (partially) characterized in terms of a similar condition on a trace-free covariant two-tensor $\tau_g(\phi)$ defined in section \ref{ExtremalMetrics}; see (\ref{Two-Tensor}).

\begin{theorem}\label{E-Main}
Let $g\in \mathcal{R}(M^4)$ be a Riemannian metric for which $\lambda_k(P_g)\not = 0$ and for which either $\lambda_k(P_g)>\lambda_{k-1}(P_g)$ or $\lambda_k(P_g)<\lambda_{k+1}(P_g)$.
\begin{enumerate}[(i)]
\item If the metric $g$ is extremal for the normalized $k$-th eigenvalue functional, then there exists  a family of eigenfunctions $\{\phi_1,\cdots, \phi_p\}$ associated with $\lambda_k(P_g) $such that 
$\sum_{i=1}^p\tau_g(\phi_i) =0$ and $\sum_{i=1}^p\phi^2=\frac{2}{|\lambda_k(P_g)|}$.

\item If there exists a collection of eigenfunctions $\{\phi_1,\cdots, \phi_p\}$ associated with $\lambda_k(P_g)$ such that $\sum_{i=1}^p\phi_i^2$ is constant and $\sum_{i=1}^p\tau_g(\phi_i) = 0$,
then $g$ is extremal for the normalized $k$-th eigenvalue functional. 
\end{enumerate}
\end{theorem}

It is unclear what the meaning of the condition ``$\sum_{i=1}^p\tau_g(\phi_i)=0$" is in this case. Understanding such condition would be important to future developments in this theory.

\subsection{Organization of the paper} In section \ref{Preliminaries}, we argue that the one-sided derivatives of the eigenvalue functional (\ref{C-PFunctional3}) exist at $t=0$. This allows us to define what conformally extremal and extremal metrics are. In sections \ref{ProofMain1} and \ref{ProofMain2}, we discuss the proof of Theorems \ref{Main1} and \ref{Main2}. Some important consequences are discussed in section \ref{MainCorollaries}. In particular, it is shown that in certain cases where $\lambda_1(P_g)=0$, the first non-trivial eigenvalue admits no local minimum (Corollary \ref{NoLocalMinima}). We would like to point out that the theory developed in these sections is parallel to that of the Laplace-Beltrami operator, and some of the techniques are similar to those found in the work of El Soufi-Iias in \cite{Soufi, Soufi2}. In section \ref{ProofKorevaar}, we prove Theorem \ref{Main3} using ideas from \cite{Hassannezhad}. Section \ref{Sphere} is devoted to the case of the sphere. Finally, in section \ref{ExtremalMetrics}, we prove Theorem \ref{E-Main}.

\subsection{Acknowledgements} Part of this work was carried out during the author’s Ph.D. studies at the University of Notre Dame. The author would like to thank his doctoral adviser, Professor Matthew J. Gursky, for suggesting the problem and for explaining many important properties of the Paneitz operator on four-manifolds. This work was supported by the NSF grant RTG-DMS-1502424.

\section{Preliminaries: One-sided derivatives of the eigenvalue functional}\label{Preliminaries}

The main goal of this section is to define what conformally extremal metrics are. Let $g(t)=e^{2w_t}g_o$ be an analytic perturbation of an arbitrary metric $g_o$ in  $[g]$, i.e. $w_0 \equiv 0$ and $w_t$ depends real analytically in $t$ for $|t|$ small. We refer to the function $w_t$ as the \textit{generating function} for the perturbation $g(t)$. 

As in the case of linear operators on finite-dimensional vector spaces, the function $t\mapsto \lambda_k(P_{g(t)})$ is continuous but not differentiable in general. However, we will show that both $\frac{d}{dt}\lambda_k(P_{g(t)})|_{t=0^+}$ and $\frac{d}{dt}\lambda_k(P_{g(t)})|_{t=0^-}$ exists. We remark that the existence of the one-sided derivatives for Laplacian eigenvalues have been shown out in many works; see for instance \cite{Bando, Nadirashvili3, Soufi, Soufi2} and references therein. 

The existence of the one-sided derivatives rely on the following theorem from perturbation theory for linear operators. The original theory traces back to Rellich-Kato's work, but it was Canzani in \cite{Canzani} who proved that such a theory could be applied to a certain class of conformally covariant operators acting on smooth bundles over $M$, including GJMS operators like the Paneitz operator.  
 
\begin{proposition}[Rellich-Kato, Canzani]\label{RKC}
Let $\lambda_k(P_{g_o})$ be the $k$-th eigenvalue of the Paneitz operator with respect to $g_o\in[g]$, and denote by $m$ its multiplicity. Pick any analytic perturbation $g(t)=e^{2w_t}g_o$ of $g_o$. Then there exist $\Lambda_1(t),\cdots,\Lambda_m(t)$ analytic in $t$, and $\phi_1(t),\cdots,\phi_m(t)$ convergent power series in $t$ with respect to the $L^2$-norm topology, such that
\begin{equation}\label{RKC1}
P_{g(t)}\phi_i(t) = \Lambda_i(t)\phi_i(t), \text{ with } \Lambda_i(0)=\lambda_k(P_{g_o}), \;\; i=1,\cdots,m;
\end{equation}
and
\begin{equation}\label{RKC2}
\int_M \phi_i(t)\phi_j(t)\;dv_{g(t)} = \delta_{ij},\;\; i=1,\cdots,m.
\end{equation}
Moreover, if we select positive constants $d_1$ and $d_2$ such that the spectrum of $P_{g_o}$ in $(\lambda_k(P_{g_o}) -d_1,\lambda_k(P_{g_o}) + d_2)$ consists only of $\lambda_k(P_{g_o})$, then one can find a small enough $\delta >0$ such that the spectrum of $P_{g(t)}$ in $(\lambda_k(P_{g_o}) -d_1,\lambda_k(P_{g_o}) + d_2)$ consists of $\{\Lambda_1(t),\cdots,\Lambda_m(t)\}$ alone for $|t|<\delta$.
\end{proposition}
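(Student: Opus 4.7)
The plan is to reduce the problem to a real-analytic self-adjoint perturbation on a fixed Hilbert space and then invoke the abstract Rellich-Kato theorem. The first obstacle is that the natural Hilbert space $L^{2}(M, dv_{g(t)})$ itself varies with $t$, so I would begin by introducing the unitary isomorphism $U_t : L^{2}(M, dv_{g(t)}) \to L^{2}(M, dv_{g_o})$ defined by $U_t(\psi) = e^{2w_t}\psi$; unitarity follows at once from $dv_{g(t)} = e^{4w_t}\,dv_{g_o}$. Using the conformal covariance (\ref{ConfTransP}) a direct computation gives
\[
\widetilde{P}(t) f \;:=\; U_t\, P_{g(t)}\, U_t^{-1} f \;=\; e^{-2w_t}\, P_{g_o}\bigl(e^{-2w_t} f\bigr).
\]
The operator $\widetilde{P}(t)$ acts on the fixed Hilbert space $L^{2}(M, dv_{g_o})$, is self-adjoint on the Sobolev domain $W^{4,2}(M, g_o)$, and shares its spectrum with $P_{g(t)}$. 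Any $L^{2}(g_o)$-orthonormal system of eigenfunctions for $\widetilde{P}(t)$ pulls back through $U_t^{-1}$ to an $L^{2}(g(t))$-orthonormal system of eigenfunctions for $P_{g(t)}$, which will furnish the orthonormality (\ref{RKC2}) automatically.

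The next step is to verify that $t \mapsto \widetilde{P}(t)$ is a self-adjoint holomorphic family of type (A) in Kato's sense, sharing the common domain $W^{4,2}(M, g_o)$ for all $t$ near $0$. Since $w_t$ depends real analytically on $t$ as a smooth function on $M$, the multiplication operators $f \mapsto e^{\pm 2w_t} f$ depend real analytically on $t$ both as bounded maps $L^{2} \to L^{2}$ and as bounded maps $W^{4,2} \to W^{4,2}$; together with the fixed principal operator $P_{g_o}$, this yields analyticity of $\widetilde{P}(t)$ on its fixed domain. With this structural input in place, the Rellich-Kato theorem applied to the isolated eigenvalue $\lambda_k(P_{g_o})$ of finite multiplicity $m$ produces $m$ real-analytic branches $\Lambda_1(t), \dots, \Lambda_m(t)$ passing through $\lambda_k(P_{g_o})$ at $t=0$, together with corresponding eigenvector branches $\widetilde{\phi}_i(t)$ that converge as power series in $L^{2}(g_o)$. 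Setting $\phi_i(t) := U_t^{-1} \widetilde{\phi}_i(t) = e^{-2w_t}\widetilde{\phi}_i(t)$ then produces (\ref{RKC1}) and (\ref{RKC2}).

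To obtain the spectral confinement statement, the plan is to use the Riesz projection $\mathbb{P}(t) := \frac{1}{2\pi \sq}\oint_{\gamma}(z - \widetilde{P}(t))^{-1}\,dz$, where $\gamma$ is a small simple closed curve in $\mathbb{C}$ enclosing $\lambda_k(P_{g_o})$ and no other point of the spectrum of $P_{g_o}$. Analyticity of $\widetilde{P}(t)$ forces $\mathbb{P}(t)$ to be analytic, hence of locally constant rank, so its rank remains equal to $m$ for $|t| < \delta$ small enough; this pins down the portion of the spectrum of $\widetilde{P}(t)$ lying inside $\gamma$ to precisely the $m$ branches already constructed, giving the final conclusion.

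The main technical obstacle, and where I would defer to Canzani \cite{Canzani} (and to Bando-Urakawa \cite{Bando} in a parallel setting) rather than reprove from scratch, is justifying that $\widetilde{P}(t)$ genuinely constitutes a type (A) holomorphic family: this requires a uniform resolvent bound under the multiplicative perturbation and a careful check of domain invariance, neither of which is automatic from the clean algebraic identity above. The analyticity of the multiplication operators makes this very plausible, but the PDE-theoretic verification — comparing the graph norms of $\widetilde{P}(t)$ and $P_{g_o}$ uniformly in $t$ — is the delicate piece that the abstract theorem demands as an input.
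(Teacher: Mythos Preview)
The paper does not supply its own proof of this proposition: it is stated as a known result, attributed to Rellich--Kato and to Canzani \cite{Canzani} (with Bando--Urakawa \cite{Bando} cited for a parallel setting), and is used as a black box throughout Section~\ref{Preliminaries}. There is therefore nothing in the paper to compare your argument against.

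That said, your outline is the standard route to this kind of statement and is essentially the strategy Canzani follows for conformally covariant operators: conjugate by the unitary $U_t$ to land on a fixed Hilbert space, check that the resulting family $\widetilde P(t)=e^{-2w_t}P_{g_o}(e^{-2w_t}\,\cdot\,)$ is self-adjoint holomorphic of type~(A) on the common domain $W^{4,2}(M,g_o)$, invoke the Rellich--Kato theorem for the analytic eigenvalue and eigenvector branches, and use the Riesz projector's local constancy of rank for the spectral confinement. Your identification of the one genuinely nontrivial step --- verifying the type~(A) hypothesis via uniform graph-norm comparison --- is accurate, and deferring that to \cite{Canzani} is exactly what the paper itself does.
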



As a consequence, we can prove the key result of this section:

\begin{proposition}\label{OSD}
 Let $g(t)=e^{2w_t}g_o$ be any analytic perturbation of $g_o$, let $m$ be the multiplicity of $\lambda_k(P_{g_o})$, and denote by $\{\Lambda_i(t)\}_{i=1}^m$ and $\{\phi_i(t)\}_{i=1}^m$ the family arising from Proposition \ref{RKC}. The one-sided derivatives of $\lambda_k(P_{g(t)})$,
\[
\frac{d}{dt}\lambda_k(P_{g(t)})|_{t=0^+}\text{ and }\frac{d}{dt}\lambda_k(P_{g(t)})|_{t=0^-},
\]
both exist at $t=0$. Moreover, 
\begin{enumerate}[(i)]
\item If $\lambda_k(P_{g_o}) = \lambda_{k-j+1}(P_{g_o}) >\lambda_{k-j}(P_{g_o})$, then 
$\frac{d}{dt}\lambda_k(P_{g(t)})|_{t=0^+}$ and  $\frac{d}{dt}\lambda_k(P_{g(t)})|_{t=0^-}$ are the $j$-th smallest element and the $j$-th largest element in $\{\Lambda_1'(0),\cdots, \Lambda_m'(0)\}$, respectively.\footnote{To clarify: by the $j$-th smallest element we mean the $j$-th element in the collection when arranged in increasing order, while by the $j$-th largest element we mean the $j$-th element in the collection when arranged in decreasing order.}   

\item If $\lambda_k(P_{g_o})=\lambda_{k-j+1}(P_{g_o})<\lambda_{k+j}(P_{g_o})$, then
$\frac{d}{dt}\lambda_k(P_{g(t)})|_{t=0^+}$ and $\frac{d}{dt}\lambda_k(P_{g(t)})|_{t=0^-}$
are the $j$-th largest element and the $j$-th smallest element in $\{\Lambda_1'(0),\cdots, \Lambda_m'(0)\}$, respectively.
\end{enumerate}
In both cases,
\begin{equation}\label{AnalyticEigen}
\Lambda_i'(0) = -\lambda_k(P_{g_o})\int_M \alpha\phi_i(0)^2\;dv_{g_o},
\end{equation}
where $\alpha=4\frac{d}{dt}w_t|_{t=0}$.
\end{proposition}

\begin{proof}
 For the particular perturbation $g(t)$ of $g_o$, select $d>0$ such that the spectrum of $P_{g_o}$ in the interval $(\lambda_k(P_{g_o})-d, \lambda_k(P_{g_o})+d)$ consists only of $\lambda_k(P_{g_o})$, and then let $\delta>0$ as in Proposition \ref{RKC}. The continuity of $t\mapsto \lambda_k(P_{g(t)})$ and the analyticity of $t\mapsto \Lambda_i(t)$ imply that there exists $a,b\in\{1,\cdots,m\}$ and $\eta\in(0,\delta)$ such that
\begin{equation} \label{OSD1}
\lambda_k(P_{g(t)}) = \begin{cases}
			      \Lambda_a(t) & \hspace{.2in} \text{if } t\in[0,\eta) \\
			      \Lambda_b(t) & \hspace{.2in} \text{if } t\in(-\eta,0].
			      \end{cases}
\end{equation}
Notice that $a$ and $b$ could be different since $\Lambda_i(0) = \lambda_k(P_{g_o})$ for every $i=1,\cdots,m$. This shows the existence of the one-sided derivatives of $\lambda_k(P_{g(t)})$ at $t=0$.

We proceed with the proof of case $(i)$, and thus let us assume that $\lambda_k(P_{g_o}) = \lambda_{k-j+1}(P_{g_o}) > \lambda_{k-j}(P_{g_o})$. Noticed that our choice of $\delta$ implies that
\begin{equation}
\lambda_{k-j}(P_{g_o}) \not \in (\lambda_k(P_{g_o})-d,\lambda_k(P_{g_o})+d),
\end{equation}
and
\begin{equation}
\text{Spec}(P_{g(t)})\cap(\lambda_k(P_{g_o}) - d , \lambda_k(P_{g_o}) + d) = \{\Lambda_1(t),\cdots,\Lambda_m(t)\}
\end{equation}
for $|t|<\delta$. By taking $\delta$ smaller if necessary, we can assume that for any $i,j\in\{1,\cdots, m\}$ either the curves $\Lambda_i(t)$ and $\Lambda_j(t)$ coincide or they only intersect at $t=0$ for $|t|<\delta$. 

Since for $t\in[0,\delta)$ any pair of curves either coincide or only intersect at $t=0$, it make sense to arrange them in increasing order. Without loss of generality, after a possible re-index, we assume that $\Lambda_1(t)\le \cdots\le \Lambda_m(t)$ for $t\in[0,\delta)$. Therefore, $\lambda_k(P_{g(t)}) = \Lambda_j(t)$ for $t\in[0,\delta)$ and so
\[
\cdots\le \frac{\Lambda_{j-1}(t) - \lambda_k(P_{g_o})}{t} \le \frac{\lambda_k(P_{g(t)}) - \lambda_k(P_{g_o})}{t} \le \frac{\Lambda_{j+1}(t) - \lambda_k(P_{g_o})}{t}\le \cdots.
\]
This implies
\begin{equation}
\cdots\le \Lambda_{j-1}'(0)\le \frac{d}{dt}\lambda_k(P_{g(t)})|_{t=0^+} = \Lambda_j'(0)\le \Lambda_{j+1}'(0)\le \cdots,
\end{equation}
which proves that $\frac{d}{dt}\lambda_k(P_{g(t)})|_{t=0^+}$ is precisely the $j$-th smallest element in the collection $\{\Lambda_1'(0),\cdots, \Lambda_m'(0)\}$.

Similarly, for $t\in(-\delta,0]$, we can assume that $\Lambda_1(t)\le \cdots\le \Lambda_m(t)$ after a possible re-index, so that $\lambda_k(P_{g(t)}) = \Lambda_j(t)$ for $t\in (-\delta,0]$. Then 
\begin{equation}
\cdots\ge \frac{\Lambda_{j-1}(t) - \lambda_k(P_{g_o})}{t} \ge \frac{\lambda_k(P_{g(t)}) - \lambda_k(P_{g_o})}{t} \ge \frac{\Lambda_{j+1}(t) - \lambda_k(P_{g_o})}{t}\ge \cdots
\end{equation}
and thus 
\begin{equation}
\cdots\ge \Lambda_{j-1}'(0)\ge \frac{d}{dt}\lambda_k(P_{g(t)})|_{t=0^-} = \Lambda_j'(0)\ge \Lambda_{j+1}'(0)\ge \cdots,
\end{equation}
that is, $\frac{d}{dt}\lambda_k(P_{g(t)})|_{t=0^-}$ is the $j$-th largest element in $\{\Lambda_1'(0),\cdots, \Lambda_m'(0)\}$. The proof of case (ii) is similar and it is therefore omitted. 

It remains to show (\ref{AnalyticEigen}). We use the eigenvalue equation (\ref{RKC1}), together with the transformation law 
\begin{equation}\label{OSD-TransformationLaw}
P_{g_o} (\phi_i(t)) = \Lambda_i(t)e^{4w_t}\phi_i(t).
\end{equation}
Differentiating (\ref{OSD-TransformationLaw}) with respect to $t$ and setting $t=0$ gives 
\begin{equation}\label{OSD3}
P_{g_o}(\phi_i') = \Lambda_i'(0)\phi_i + \lambda_k(P_{g_o})\alpha\phi_i + \lambda_k(P_{g_o}) \phi_i',
\end{equation}
where $\phi_i:=\phi_i(0)$ and $\phi_i':= \frac{d}{dt}\phi_i(t)|_{t=0}$. On the other hand, observe that we obtain $P_{g_o} \phi_i = \lambda_k(P_{g_o})\phi_i$ by setting $t=0$ in (\ref{RKC1}), and thus
\begin{equation}\label{OSD4}
\phi_i'P_{g_o}\phi_i = \lambda_k(P_{g_o}) \phi_i'\phi_i.
\end{equation} 
Also, multiply (\ref{OSD3}) by $\phi_i$ to get
\begin{equation}\label{OSD5}
 \phi_iP_{g_o}\phi_i' = \Lambda_i'(0) \phi_i^2 + \lambda_k(P_{g_o})\alpha\phi_i^2 + \lambda_k(P_{g_o}) \phi_i\phi_i'. 
\end{equation}
Subtract (\ref{OSD4}) from (\ref{OSD5}) and integrate with respect to $dv_{g_o}$ to deduce
\begin{equation}
 \underbrace{\int_M(\phi_iP_{g_o}\phi_i' - \phi_i'P_{g_o}\phi_i)\;dv_{g_o}}_{=0\text{ by self-adjointness of }P_{g_o}} = \Lambda_i'(0)\underbrace{\int_M \phi_i^2\;dv_{g_o}}_{=1\text{ by }(\ref{RKC2})}+ \lambda_k(P_{g_o})\int_M\alpha\phi_i^2\;dv_{g_o}.
\end{equation}
Hence,
\begin{equation}
\Lambda_i'(0) = -\lambda_k(P_{g_o})\int_M \alpha\phi_i^2\;dv_{g_o}.
\end{equation}
\end{proof}

\begin{corollary}\label{OSD-cor}
Under the same assumptions as in Proposition \ref{OSD}, if $\lambda_k(P_{g_o})>\lambda_{k-1}(P_{g_o})$, then 
\begin{equation}\label{OSD+1}
\frac{d}{dt}\lambda_k(P_{g(t)})|_{t=0^+} = \min_{1\le i \le m} \Lambda'_i(0)  
\end{equation}
and
\begin{equation}\label{OSD-1}
\frac{d}{dt}\lambda_k(P_{g(t)})|_{t=0^-} = \max_{1\le i \le m} \Lambda'_i(0);
\end{equation}
while if $\lambda_k(P_{g_o})<\lambda_{k+1}(P_{g_o})$, then
\begin{equation}\label{OSD+2}
\frac{d}{dt}\lambda_k(P_{g(t)})|_{t=0^+} = \max_{1\le i \le m} \Lambda'_i(0)
\end{equation}
and
\begin{equation}\label{OSD-2}
\frac{d}{dt}\lambda_k(P_{g(t)})|_{t=0^-} = \min_{1\le i \le m} \Lambda'_i(0).
\end{equation}
\end{corollary}

\begin{remark}If we know the sign of $\lambda_k(P_{g_o})$, then the one-sided derivatives can be rewritten in terms of formula (\ref{AnalyticEigen}). For instance, if $\lambda_k(P_{g_o})>0$ and $\lambda_k(P_{g_o}) > \lambda_{k-1}(P_{g_o})$, so that we are in case (i) of Corollary \ref{OSD-cor}, then
\begin{equation}
\frac{d}{dt}\lambda_k(P_{g(t)})|_{t=0^+} = - \lambda_k(P_{g_o})\max_{1\le i \le m} \int_M \alpha\phi_i^2\;dv_{g_o},
\end{equation}
and
\begin{equation}
\frac{d}{dt}\lambda_k(P_{g(t)})|_{t=0^-} = - \lambda_k(P_{g_o}) \min_{1\le i\le m} \int_M \alpha\phi_i^2\;dv_{g_o}.
\end{equation}
\end{remark}

We end this section with the following definition. 

\begin{definition}\label{C-extremal}
A metric $g_w\in [g]$ is said to be \textit{conformally extremal} for $\lambda_k$ if and only if for any volume-preserving perturbation $g(t)\subset [g]$ of $g_w$ which is analytic in a neighborhood of $t=0$, we have 
\begin{equation}
\frac{d}{dt}\lambda_k(P_{g(t)})|_{t=0^+}\cdot \frac{d}{dt}\lambda_k(P_{g(t)})|_{t=0^-}\le 0.
\end{equation}
\end{definition}

\begin{remark} 
Suppose we have 
\begin{equation}\label{C-extremal2}
\frac{d}{dt}\lambda_k(P_{g(t)})_{t=0^-}\ge 0 \text{ and } \frac{d}{dt}\lambda_k(P_{g(t)})|_{t=0^+}\le 0
\end{equation}
As one can check, condition (\ref{C-extremal2}) is equivalent to 
\begin{equation}\label{C-extremal3}
\lambda_k(P_{g(t)})\le \lambda_k(P_{g_w}) + o(t) \text{ as }t\to 0.
\end{equation}
Similarly,
\begin{equation}\label{C-extremal4}
\frac{d}{dt}\lambda_k(P_{g(t)})_{t=0^-}\le 0 \text{ and } \frac{d}{dt}\lambda_k(P_{g(t)})|_{t=0^+}\ge 0
\end{equation}
is equivalent to 
\begin{equation}\label{C-extremal5}
\lambda_k(P_{g(t)})\ge \lambda_k(P_{g_w}) + o(t) \text{ as }t\to 0.
\end{equation}
Therefore, we could define conformally extremal by requiring that both (\ref{C-extremal3}) and (\ref{C-extremal5}) hold for any analytic perturbation $g(t)$ of $g_w$. This is analogous to the definition used by Nadirashvili in \cite{Nadirashvili3}.
\end{remark}

\begin{remark}
The usage of analytic approximation can be relaxed to smooth approximation, following the arguments of Fraser-Schoen in \cite{Fraser}. The main benefit of using analytic approximation is the differentiability of the family of eigenfunctions $\{\phi_i(t)\}_{i=1}^{p}$ in Proposition \ref{RKC}. This simplifies some of the arguments and it is enough for the purposes of our work.
\end{remark}

\section{Conformally Extremal Metrics}

In this section, we prove the main theorems concerning conformally extremal metrics. As mentioned in the introduction, theorems \ref{Main1}, \ref{Main2}, and \ref{Main3} generalize known results in the case of Laplacian eigenvalues on surfaces. Let us briefly discuss some of these here. 

Recall that on closed surfaces there is a topological upper bound (see \cite{Yang, Korevaar, Karpukhin3}) for each normalized eigenvalue $\lambda_k(-\Delta_g)\text{Area}(M^2,g)$ as $g$ varies over the space $\mathcal{R}(M^2)$ of all Riemannian metrics. In particular, each normalized eigenvalue functional is bounded when restricted to a conformal class. Some of the main results concerning conformally extremal metrics are:

\begin{enumerate}
\item\label{1} If $g_w\in [g]$ is conformally extremal for the normalized eigenvalue functional $g_w\in [g]\mapsto \lambda_k(-\Delta_{g_w})\text{Area}(M^2,g_w)$\footnote{Recall that in order to stay consistent with our notation, we denote by $\lambda_k(-\Delta_g)$ the actual $k$-th eigenvalue of $-\Delta_g$.} ($k\ge 2$), then there exists a family of eigenfunctions $\{\phi_1,\cdots, \phi_p\}$ associated with $\lambda_k(-\Delta_g)$ such that $\sum_{i=1}^p\phi_i^2\equiv 1$ on $M^2$. By differentiating this constraint, we find that $\lambda_k(-\Delta_g)= \sum_{i=1}^p|\nabla_g\phi|^2$, and thus the map $\Phi = (\phi_1,\cdots,\phi_p)$ defines a harmonic map with constant energy density into the sphere $\mathbb{S}^{p-1}$ (see for instance \cite{Soufi, Soufi2}).

\item\label{2} If either $\lambda_k(-\Delta_g)>\lambda_{k-1}(-\Delta_g)$ or $\lambda_k(-\Delta_g)<\lambda_{k+1}(-\Delta_g)$, then the necessary conditions in (1) are also sufficient for the existence of conformally extremal metrics (Theorem 4.1 in \cite{Soufi2}). That is, if $(M^2,g)$ admits a harmonic map with constant density into a sphere, then $g$ is conformally extremal for some eigenvalue. 
\end{enumerate}

We remark that the existence of maximal metrics in conformal classes has been proved in the case of closed surfaces (\cite{Petrides1,Petrides2,Karpukhin}). It is natural to study similar existence questions for eigenvalues of the Paneitz operator in conformal classes. Notice that there are conformal classes for which the Paneitz operator have negative eigenvalues; see \cite{Canzani2} for explicit examples. Moreover, the number of negative eigenvalues $N([g])$ is a conformal invariant (\cite{Canzani2}). 
Therefore, some of the techniques developed in \cite{Gursky3} by Gursky and the author for maximizing negatives eigenvalues of the Conformal Laplacian in higher dimensional manifolds could work in this setting. 

\subsection{Proof of Theorem \ref{Main1}}\label{ProofMain1}

Theorem \ref{Main1}, and its consequence Corollary \ref{Cor-Main1}, generalize (\ref{1}) above to the Paneitz operator on closed four-manifolds. Before starting with the proof of Theorem \ref{Main1}, we discuss the following lemma:

\begin{lemma}[Leibniz rule for $P_g$]\label{P-LR} For smooth functions $\phi$ and $\psi$ on $M^4$, we have
\begin{equation}\label{P-LR1}
\begin{split}
    P_g(\phi\psi) =& \psi P_g(\phi) + \phi P_g(\psi) + 2(\Delta_g\phi)(\Delta_g \psi) + 2\langle \nabla_g\Delta_g\phi,\nabla_g\psi\rangle \\ &+ 2\langle \nabla_g\Delta_g\psi, \nabla_g \phi\rangle + 2\Delta_g\langle \nabla_g\phi,\nabla_g\psi\rangle - \frac{4}{3}R_g\langle\nabla_g\phi,\nabla_g \psi\rangle  \\&+4\text{Ric}_g(\nabla_g\phi,\nabla_g\psi)
\end{split}
\end{equation}
In particular,
\begin{equation}\label{P-LR2}
\begin{split}
    P_g(\phi^2) =& 2\phi P_g(\phi) + 2(\Delta_g\phi)^2 + 4 \langle \nabla_g\Delta_g\phi,\nabla_g\phi\rangle + 2\Delta_g|\nabla_g\phi|^2 \\&- \frac{4}{3}R_g|\nabla_g\phi|^2 + 4\text{Ric}_g(\nabla_g\phi,\nabla_g\phi) 
\end{split}
\end{equation}
\end{lemma}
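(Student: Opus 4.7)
The plan is to prove the Leibniz formula by using the local expression (\ref{P-local}) for $P_g$ and expanding each of the three constituent terms separately. That is, we will separately compute
\[
\Delta_g^2(\phi\psi),\quad -\tfrac{2}{3}\operatorname{div}_g(R_g\nabla_g(\phi\psi)),\quad 2\sum_{i=1}^4\operatorname{div}_g(\operatorname{Ric}_g(\nabla_g(\phi\psi),E_i)E_i),
\]
and then collect like terms, recognizing that the pieces proportional to $\psi$ (resp.\ $\phi$) reassemble into $\psi P_g(\phi)$ (resp.\ $\phi P_g(\psi)$), with the remaining pieces being the cross terms appearing in (\ref{P-LR1}).

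For the bi-Laplacian piece, I would iterate the standard product rule $\Delta_g(\phi\psi)=\psi\Delta_g\phi+\phi\Delta_g\psi+2\langle\nabla_g\phi,\nabla_g\psi\rangle$: apply $\Delta_g$ once more to get
\[
\Delta_g^2(\phi\psi)=\psi\Delta_g^2\phi+\phi\Delta_g^2\psi+2(\Delta_g\phi)(\Delta_g\psi)+2\langle\nabla_g\Delta_g\phi,\nabla_g\psi\rangle+2\langle\nabla_g\Delta_g\psi,\nabla_g\phi\rangle+2\Delta_g\langle\nabla_g\phi,\nabla_g\psi\rangle.
\]
For the scalar curvature piece, I would expand $\nabla_g(\phi\psi)=\psi\nabla_g\phi+\phi\nabla_g\psi$ and use $\operatorname{div}_g(fV)=f\operatorname{div}_g V+\langle\nabla_g f,V\rangle$ to produce exactly $\psi\operatorname{div}_g(R_g\nabla_g\phi)+\phi\operatorname{div}_g(R_g\nabla_g\psi)+2R_g\langle\nabla_g\phi,\nabla_g\psi\rangle$, contributing the term $-\tfrac{4}{3}R_g\langle\nabla_g\phi,\nabla_g\psi\rangle$ to the cross terms after multiplication by $-\tfrac{2}{3}$.

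For the Ricci piece, the same divergence rule gives cross terms of the form $2\sum_i \operatorname{Ric}_g(\nabla_g\phi,E_i)\langle\nabla_g\psi,E_i\rangle$ and its symmetric counterpart; since $\{E_i\}$ is an orthonormal frame, $\sum_i\langle\nabla_g\psi,E_i\rangle E_i=\nabla_g\psi$, so this collapses to $2\operatorname{Ric}_g(\nabla_g\phi,\nabla_g\psi)+2\operatorname{Ric}_g(\nabla_g\psi,\nabla_g\phi)=4\operatorname{Ric}_g(\nabla_g\phi,\nabla_g\psi)$ by symmetry of $\operatorname{Ric}_g$. Assembling the three pieces yields (\ref{P-LR1}) directly, and (\ref{P-LR2}) follows by setting $\psi=\phi$ and noting that $2\langle\nabla_g\Delta_g\phi,\nabla_g\phi\rangle$ appears twice.

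Because the result is essentially a formal consequence of the Leibniz rules for $\nabla_g$, $\Delta_g$, and $\operatorname{div}_g$, there is no serious obstacle: the whole proof is bookkeeping. The one spot that requires care is the Ricci cross term, where one must use the symmetry of the Ricci tensor and the identity $\sum_i\langle X,E_i\rangle E_i=X$ to get the coefficient $4$ correctly; a similar (but easier) caution is needed for the scalar curvature coefficient $-\tfrac{4}{3}$. Everything else is routine.
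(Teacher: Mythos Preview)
Your proposal is correct and follows essentially the same approach as the paper's own proof: both expand each term of the local expression (\ref{P-local}) using the Leibniz rules for $\nabla_g$, $\Delta_g$, and $\operatorname{div}_g$, then regroup the pieces proportional to $\psi$ and $\phi$ into $\psi P_g(\phi)$ and $\phi P_g(\psi)$, leaving the stated cross terms. Your treatment of the Ricci cross term is in fact more explicit than the paper's, which simply writes out the divergence expansion and passes directly to the final formula.
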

\begin{proof}
It is enough to check (\ref{P-LR1}) locally. We start by computing $\Delta_g^2(\phi\psi)$:
\begin{equation}
\begin{split}\label{P-LR3}
    \Delta_g^2(\phi\psi) =& \Delta_g(\phi\Delta_g\psi + \psi\Delta_g\phi + 2\langle\nabla_g\phi,\nabla_g\psi\rangle) \\ =&\psi \Delta_g^2\phi + \phi\Delta_g^2\psi + 2(\Delta_g\phi)(\Delta_g\psi) + 2\langle \nabla_g\Delta_g\phi,\nabla_g\psi\rangle \\ &+2\langle \nabla_g\Delta_g\psi, \nabla_g \phi\rangle + 2\Delta_g\langle \nabla_g\phi,\nabla_g\psi\rangle 
\end{split}
\end{equation}
From (\ref{P-local}), we then deduce 
\begin{equation}
\begin{split}
    P_g(\phi\psi) =& \Delta^2_g(\phi\psi) - \frac{2}{3}\text{div}_g(R_g\nabla_g(\phi\psi)) + 2\sum_{i=1}^4\text{div}_g(\text{Ric}_g(\nabla_g(\phi\psi),E_i)E_i)\\ =& \Delta_g^2(\phi\psi) -\frac{2}{3}\text{div}_g(\psi R_g\nabla_g\phi) -\frac{2}{3}\text{div}_g(\phi R_g\nabla_g\psi) \\ &+2\sum_{i=1}^4\text{div}_g(\psi\text{Ric}_g(\nabla_g\phi,E_i)E_i) +2\sum_{i=1}^4\text{div}_g(\phi\text{Ric}_g(\nabla_g\psi,E_i)E_i)\\ =& \psi P_g(\phi) + \phi P_g(\psi) + 2(\Delta_g\phi)(\Delta_g \psi) + 2\langle \nabla_g\Delta_g\phi,\nabla_g\psi\rangle \\ &+ 2\langle \nabla_g\Delta_g\psi, \nabla_g \phi\rangle + 2\Delta_g\langle \nabla_g\phi,\nabla_g\psi\rangle - \frac{4}{3}R_g\langle\nabla_g\phi,\nabla_g \psi\rangle  \\&+4\text{Ric}_g(\nabla_g\phi,\nabla_g\psi).
\end{split}
\end{equation}
To obtain (\ref{P-LR2}) we simply take $\psi=\phi$. This completes the proof.
\end{proof}

We can now proceed with the proof of Theorem \ref{Main1}. The main tool we use is Hahn-Banach classical separation theorem; see Chapter 1 in \cite{Brezis}. The proof is similar to the one for laplacian eigenvalues as discussed in \cite{Soufi, Soufi2, Petrides1, Fraser}.

\begin{proof}[Proof of Theorem \ref{Main1}]
Consider the subset 
\begin{equation}\label{subset1}
\left\{\phi^2:\; \phi\in E_k(P_{g_e})\text{ and }\|\phi\|_{L^2(M,g_e)}=1\right\} 
\end{equation}
of $L^2(M^4,g_e)$, where $E_k(P_{g_e})$ denotes the $k$-th eigenspace of $P_{g_e}$, and denote by $K$ its convex hull. $K$ is the set of all finite convex combinations of the initial set subset:
\[
K= \left\{\sum_{\text{finite}} a_j\phi_j^2:\;a_i\ge 0,\; \sum a_i=1, \phi_i\in E_k(P_{g_e}), \|\phi_i\|_{L^2(M,g_e)}=1 \right\}.
\]
The proof would be completed if we can show that $1\in K$. Since $E_k(P_{g_e})$ is a finite-dimensional subspace, we deduce that the initial subset lies in a finite-dimensional subspace. By Caratheodory's Theorem for Convex Hulls, the compactness of the original set then imply the compactness of $K$. 

We argue by contradiction, that is, assume $1\not \in K$. Then $\{1\}$ and $K$ are two disjoint convex sets, $\{1\}$ being closed, and $K$ being compact. Hahn Banach Separation Theorem implies that we can find a function $f$ in $L^2(M^4,{g_e})$ that separates $\{1\}$ and $K$:
\begin{equation}\label{Main1-eq1}
\int_M f\;dv_{g_e} >0,
\end{equation}
and
\begin{equation}\label{Main1-eq2}
\int_M f\varphi\;dv_{g_e} \le 0
\end{equation}
for all $\varphi\in K$. By standard approximation arguments, we can assume that $f$ is smooth.

Let $\tilde{f} = f - \text{Vol}(M^4,g_e)^{-1}\displaystyle\int_M f\;dv_{g_e}$, and consider the conformal perturbation of $g_e$ given by 
\begin{equation}\label{choice-perturbation}
g(t)= \frac{\text{Vol}(M^4,g_e)^{\frac{1}{2}}}{\text{Vol}(M^4,e^{t\tilde f}g_e)^{\frac{1}{2}}} e^{t\tilde{f}}g_e.
\end{equation}
This is an analytic, volume-preserving and conformal perturbation of $g_e$. Moreover, since $\frac{d}{dt} \text{Vol}(M^4,e^{t\tilde f}g_e) |_{t=0}= 2\displaystyle\int_M \tilde{f}\;dv_{g_e} = 0$ by our choice of $\tilde{f}$, we obtain $\frac{d}{dt} g(t)|_{t=0} = \tilde{f} {g_e}$. Following the notation of Proposition \ref{OSD}, set $\alpha=2\tilde{f}$ and write $\frac{d}{dt} g(t)|_{t=0} = \frac{1}{2}\alpha g_e$. 

Now, $g_e$ being conformally extremal means that the one-sided derivatives of $\lambda_k(P_{g(t)})$ along this particular perturbation should have different signs. However, by (\ref{Main1-eq1}) and (\ref{Main1-eq2}), for any $\phi\in E_k(P_{g_e})$ we find that the quantity
\begin{equation}
\begin{split}
\int_M \alpha \phi^2\;dv_{g_e} &= 2 \int_M \tilde{f}\phi^2\;dv_{g_e}\\ &= 2\underbrace{\int_M f\phi^2\;dv_{g_e}}_{\le 0} - 2\text{Vol}(M^4,g_e)^{-1}\underbrace{\left(\int_M f\;dv_{g_e}\right)}_{>0}\underbrace{\left(\int_M \phi^2\;dv_{g_e}\right)}_{>0}
\end{split}
\end{equation}
has a constant sign on $E_k(P_{g_e})$. Going back to (\ref{OSD-1}) in Corollary \ref{OSD-cor}, we deduce that this contradicts our assumption of $g_e$ being extremal. Hence, $1\in \text{Conv}(K)$.

We proceed now with the proof of (\ref{Main1Formula}). To this end, we apply Lemma \ref{P-LR} to compute $P_{g_e}\left(\sum_{i=1}^p\phi^2\right)$:
\[
\begin{split}
0 =& P_{g_e}(1) = P_{g_e}\left(\sum_{i=1}^p\phi_i^2\right) = \sum_{i=1}^p P_{g_e}(\phi_i^2) \\ =& \sum_{i=1}^p\Bigg{\{}2\phi_i^2\lambda_k(P_{g_e}) + 2(\Delta_{g_e}\phi_i)^2 + 4\langle\nabla_{g_e}\phi_i,\nabla_{g_e}\Delta_{g_e}\phi_i\rangle_{g_e} + 2\Delta_{g_e}|\nabla_{g_e}\phi_i|^2_{g_e} \\ &\hspace{.4in} - \frac{4}{3}R_{g_e}|\nabla_{g_e}\phi_i|_{g_e}^2 +4\text{Ric}_{g_e}(\nabla_{g_e}\phi_i,\nabla_{g_e}\phi_i)\Bigg{\}}.
\end{split}
\]
Since $\sum_{i=1}^p\phi_i^2 = 1$, after dividing both sides by $2$ we obtain
\begin{equation}\label{Main1-line1}
\begin{split}
\lambda_k(P_{g_e}) =& -\sum_{i=1}^p \Bigg{\{}(\Delta_{g_e}\phi_i)^2 + 2\langle\nabla_{g_e}\phi_i,\nabla_{g_e}\Delta_{g_e}\phi_i\rangle_{g_e} + \Delta_{g_e}|\nabla_{g_e}\phi_i|_{g_e}^2\\ &\hspace{.5in} - \frac{2}{3}R_{g_e}|\nabla_{g_e}\phi_i|_{g_e}^2 + 2\text{Ric}_{g_e}(\nabla_{g_e}\phi_i,\nabla_{g_e}\phi_i)\Bigg{\}}
\end{split}
\end{equation}
On the other hand, by applying formula (\ref{P-LR3}) to $\sum_{i=1}^p \phi_i^2 = 1$, we deduce
\begin{equation}\label{Main1-line2}
\begin{split}
0 &= \frac{1}{2}\Delta^2_{g_e}\left(\sum_{i=1}^p\phi_i^2\right) \\ &= \sum_{i=1}^p\left\{\phi_i\Delta^2_{g_e}\phi_i + (\Delta_{g_e}\phi_i)^2 + 2\langle\nabla_{g_e} \phi_i,\nabla_{g_e}\Delta_{g_e}\phi_i \rangle_{g_e} + \Delta_{g_e}|\nabla_{g_e} \phi_i|_{g_e}^2\right\}.
\end{split}
\end{equation}
The desired result follows after substituting (\ref{Main1-line2}) into (\ref{Main1-line1}).

Finally, we show that $\lambda_k(P_{g_e})$ is degenerate. If the multiplicity of $\lambda_k(P_{g_e})$ is $1$, then the extremal condition becomes $\phi^2\equiv 1$ for some eigenfunction associated with $\lambda_k(P_{g_e})$. This gives us that $\phi$ is constant, which is a contradiction since $\lambda_k(P_{g_e})\not = 0$ by assumption. This concludes the proof of the theorem.
\end{proof}

\subsection{Proof of Theorem \ref{Main2}}\label{ProofMain2}

Theorem \ref{Main2}, part (i), generalizes (\ref{2}) to the Paneitz operator on closed four-manifolds. Part (ii) in Theorem \ref{Main2} generalizes the following result in the case of conformally extremal metrics for the Laplace-Beltrami operator: if $(M^2,g)$ admits a non-degenerate harmonic map into some sphere $(\mathbb{S}^{p-1},g_r)$, then there is a metric $g_{w}\in[g]$ which is conformally extremal for $\lambda_k$ for some $k$; see Theorem 2.3 in \cite{Karpukhin4}. In our case, we have to account for the possibility of negative eigenvalues. 

\begin{proof}[Proof of Theorem \ref{Main2}, part (i)]
Without loss of generality, we assume that $\sum_{i=1}^p\phi^2=1$. Multiplying both sides by an arbitrary test function $\alpha$ and integrating gives
\begin{equation}\label{Main2-1}
\sum_{i=1}^p\int_M \alpha\phi_i^2\;dv_g = \int_M \alpha dv_{g_e}. 
\end{equation}
Now, let $g(t) = e^{2w_t}g_e$ be any analytic, volume-preserving perturbation of $g_e$ in $[g]$. By setting $\alpha=4\frac{d}{dt}w_t|_{t=0}$, the volume constraint gives
\begin{equation}\label{Main2-2}
0 = \int_M \left(4\frac{d}{dt}w_t|_{t=0}\right)\;dv_{g_e} = \int_M \alpha dv_{g_e}.
\end{equation}
Therefore, for any analytic, volume-preserving perturbation $g(t)$ of $g_e$ in $[g]$, it holds that
\begin{equation}\label{Main2-3}
\sum_{i=1}^p\int_M \alpha\phi_i^2\;dv_g = \int_M \alpha dv_{g_e}=0.
\end{equation}
From Corollary \ref{OSD-cor}, we conclude that the metric $g_e$ is conformally extremal.
\end{proof}

\begin{proof}[Proof of Theorem \ref{Main2}, part (ii)]. 
Recall that $\Phi=(\phi_1,\cdots, \phi_p)$ solves the equation $P_g(\Phi) = e_g(\Phi)\Phi$. Assume $e_g(\Phi)>0$ pointwise on $M^4$. We take $g_\Phi$ to be the conformal metric $g_\Phi = e_g(\Phi)^{\frac{1}{2}}g$. Then
\begin{equation}
P_{g_\Phi}(\phi_i) = \phi_i,
\end{equation}
which means that the components functions of $\Phi$ are eigenfunctions of $P_{g_\Phi}$ with corresponding eigenvalue $\lambda_k(P_{g_\Phi}) = 1$, where we have chosen $k$ such that $\lambda_k(P_{g_\Phi}) > \lambda_{k-1}(P_{g_\Phi})$. Moreover, since $\sum_{i=1}^p\phi_i^2=1$, by expanding $0=P_{g_\phi}(\sum_{i=1}^p\phi_i^2)$ as in the proof of Theorem \ref{Main1}, we conclude that $e_{g_\Phi}(\Phi)=\lambda_k(P_{g_\Phi})$ is constant. The result now follows from Theorem \ref{Main2}, part (i).

In the case where $e_g(\Phi)<0$ pointwise on $M^4$, we take $g_{\Phi} = (-e_g(\Phi))^{\frac{1}{2}}g$. Then the component functions of $\Phi$ are eigenfunctions of $P_{g_\Phi}$ with corresponding eigenvalue $-1$. The rest of the proof is as in the previous case. 
\end{proof}

\subsection{Corollaries of Theorems \ref{Main1} and \ref{Main2}}\label{MainCorollaries}

The following result is due to Canzani in \cite{Canzani}. It is a generalization to a larger class of elliptic operators of classical results about generic properties for laplacian eigenvalues (cf. \cite{Bando, Uhlenbeck}). For simplicity, we state the result only for the Paneitz operator. 
\begin{theorem}[Theorem 2.1 in \cite{Canzani}]
Let $F$ be the subset of $C^\infty(M^4,\mathbb{R})$ consisting of those smooth functions $w$ for which all non-zero eigenvalues of $P_{e^{2w}g}$ are simple:
$F:=\{w\in C^\infty(M^4,\mathbb{R}): \text{dim}(E_k(P_{e^{2w}g}))=1\; \text{for all}\; k\in \mathbb{N}\;\text{for which}\; \lambda_k(P_{e^{2w}g})\not=0 \}$. Then the set $F$ is residual, meaning that $F$ equals a countable intersection of open and dense subsets of $C^\infty(M^4,\mathbb{R})$.
\end{theorem}

Since eigenvalues associated with conformally extremal metrics are degenerate, as a consequence of Theorem \ref{Main1}, the set of smooth functions $w$ for which $\lambda_k(P_{e^{2w}g})$ is conformally extremal for some $k$ is a subset of a meagre set. In fewer words, most Riemannian metrics $g_w\in [g]$ cannot be conformally extremal for any eigenvalue.

By the last part of Theorem \ref{Main1}, if $g_e\in [g]$ is conformally extremal for the normalized $k$-th eigenvalue functional and $\lambda_k(P_{g_e})\not=0$, then either $\lambda_k(P_{g_e})=\lambda_{k-1}(P_{g_e})$ or $\lambda_k(P_{g_e}) = \lambda_{k+1}(P_{g_e})$. Depending on whether $g_e$ is a local maximizer or a local minimizer, more could be established. 

\begin{proposition}\label{LocalExtremals}
Let $(M^4,g)$ be a four-dimensional closed Riemannian manifold.
\begin{enumerate}[(i)]
\item If $g_e$ is a local maximizer for (\ref{C-PFunctional}) and $\lambda_k(P_{g_e})\not = 0$, then $\lambda_k(P_{g_e}) = \lambda_{k+1}(P_{g_e})$.
\item If $g_e$ is a local minimizer for (\ref{C-PFunctional}) and $\lambda_k(P_{g_e})\not = 0$, then $\lambda_k(P_{g_e})=\lambda_{k-1}(P_{g_e})$.
\end{enumerate}
\end{proposition}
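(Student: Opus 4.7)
The plan is to argue both parts by contradiction, using the one-sided derivative formulas of Proposition \ref{OSD} together with the analytic expression \eqref{AnalyticEigen}. The key mechanism is that strict separation of $\lambda_k(P_{g_e})$ from either $\lambda_{k-1}(P_{g_e})$ or $\lambda_{k+1}(P_{g_e})$, combined with local extremality of $g_e$, will force all the analytic branches $\Lambda_i(t)$ issuing from $\lambda_k(P_{g_e})$ to have vanishing derivative at $t=0$, which is then shown to be incompatible with $\lambda_k(P_{g_e}) \neq 0$.

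For part (i), suppose that $g_e \in C$ is a local maximizer of (\ref{C-PFunctional}) with $\lambda_k(P_{g_e}) \neq 0$, and assume toward contradiction that $\lambda_k(P_{g_e}) < \lambda_{k+1}(P_{g_e})$. Let $g(t) \subset C$ be any analytic volume-preserving perturbation of $g_e$ with generating function $w_t$, and set $\alpha = 4 w_t'|_{t=0}$. Local maximality at $t=0$ forces $\frac{d}{dt}\lambda_k(P_{g(t)})|_{t=0^+} \le 0$ and $\frac{d}{dt}\lambda_k(P_{g(t)})|_{t=0^-} \ge 0$. Proposition \ref{OSD} case (ii) then translates these inequalities into $\max_{1 \le i \le m}\Lambda_i'(0) \le 0$ and $\min_{1 \le i \le m}\Lambda_i'(0) \ge 0$, which together force $\Lambda_i'(0) = 0$ for every $i = 1, \ldots, m$.

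By formula \eqref{AnalyticEigen} and the hypothesis $\lambda_k(P_{g_e}) \neq 0$, this means $\int_M \alpha \phi_i(0)^2\, dv_{g_e} = 0$ for each $i$. As in the proof of Theorem \ref{Main1}, admissible generating functions $\alpha$ range over all smooth functions of mean zero with respect to $dv_{g_e}$ (realized, for instance, by the perturbation $g(t) = \mathrm{Vol}(M^4, e^{t\tilde f} g_e)^{-1/2} e^{t\tilde f} g_e$ for arbitrary $\tilde f$). Consequently each $\phi_i(0)^2$ is $L^2$-orthogonal to the hyperplane of mean-zero smooth functions, hence constant on the connected manifold $M$, which forces $\phi_i(0)$ itself to be constant. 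But then $P_{g_e}\phi_i(0) = \lambda_k(P_{g_e})\phi_i(0)$ would imply either $\lambda_k(P_{g_e}) = 0$ (contradicting the hypothesis) or $\phi_i(0) \equiv 0$ (contradicting $\|\phi_i(0)\|_{L^2(g_e)} = 1$). Thus $\lambda_k(P_{g_e}) = \lambda_{k+1}(P_{g_e})$.

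Part (ii) follows from the symmetric argument: if $g_e$ is a local minimizer with $\lambda_k(P_{g_e}) \neq 0$ and one assumes $\lambda_k(P_{g_e}) > \lambda_{k-1}(P_{g_e})$, then the sign conditions for a local minimum read $\frac{d}{dt}\lambda_k|_{0^+} \ge 0$ and $\frac{d}{dt}\lambda_k|_{0^-} \le 0$, and Proposition \ref{OSD} case (i) once again yields $\Lambda_i'(0) = 0$ for all $i$; the remainder of the argument is identical. There is no genuine obstacle in this proof---the only mild subtlety is ensuring that mean-zero generating functions $\alpha$ can be realized by admissible perturbations in $C$, which is handled by the same construction already exploited in the proof of Theorem \ref{Main1}.
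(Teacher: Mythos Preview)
Your overall strategy matches the paper's, but there is a genuine gap in the step where you conclude that each $\phi_i(0)^2$ is constant. The point is that the orthonormal family $\{\phi_i(0)\}_{i=1}^m$ produced by Proposition~\ref{RKC} depends on the chosen perturbation $g(t)$, hence on $\alpha$: different directions $\alpha$ select different orthonormal bases of $E_k(P_{g_e})$. So you cannot fix the functions $\phi_i(0)$ and then let $\alpha$ range over all mean-zero smooth functions; the relation $\int_M \alpha\,\phi_i(0)^2\,dv_{g_e}=0$ you have established is, as it stands, only a single scalar constraint for each choice of $\alpha$, with the $\phi_i(0)$ varying along with it.

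The paper closes this gap by also deriving the off-diagonal relations. Multiplying \eqref{OSD3} by $\phi_j(0)$ (rather than $\phi_i(0)$) and integrating yields, for $i\neq j$, $\lambda_k(P_{g_e})\int_M \alpha\,\phi_i(0)\phi_j(0)\,dv_{g_e}=0$, hence $\int_M \alpha\,\phi_i(0)\phi_j(0)\,dv_{g_e}=0$ since $\lambda_k(P_{g_e})\neq 0$. Combined with your diagonal vanishing this gives $\int_M \alpha\,\phi^2\,dv_{g_e}=0$ for \emph{every} $\phi\in E_k(P_{g_e})$, a statement that no longer references the perturbation-dependent basis. Now one may legitimately vary $\alpha$ over all mean-zero smooth functions and conclude that $\phi^2$ is constant for each $\phi\in E_k(P_{g_e})$, giving the desired contradiction. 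Equivalently, you could observe that the $\Lambda_i'(0)$ are the eigenvalues of the quadratic form $\phi\mapsto -\lambda_k(P_{g_e})\int_M \alpha\,\phi^2\,dv_{g_e}$ on $E_k(P_{g_e})$, so their simultaneous vanishing forces the form itself to vanish --- but that fact is not contained in Proposition~\ref{OSD} as stated and would need to be justified.
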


\begin{proof}
Denote by $A$ the space of all constant functions. Then we can write $L^2(M,g_e)$ $= A\oplus A^\perp$, where the orthogonal sum is with respect to the $L^2$-inner product induced by $dv_{g_e}$. Notice that $A^\perp$ is just the space of zero mean value functions with respect to $dv_{g_e}$.

The proof is by contradiction. Assume that $g_e$ is a local maximizer for (\ref{C-PFunctional3}), and that $\lambda_k(P_{g_e})<\lambda_{k+1}(P_{g_e})$. For any function $\varphi\in A^\perp$, select an analytic perturbation $g(t)=e^{2w_t}g_e\in [g]$ satisfying 
\begin{equation}
\frac{1}{2}\alpha:= 2\frac{d}{dt}w_t|_{t=0}=\varphi.
\end{equation}
Since $\displaystyle \int_M\varphi \;dv_{g_e} = 0$, the perturbation $g(t)$ keeps the volume fixed. If $m$ denotes the multiplicity of $\lambda_k(P_{g_e})$, then by applying Proposition \ref{RKC} we deduce the existence of families $\Lambda_1(t),\cdots, \Lambda_m(t)$ and $\phi_1(t),\cdots,\phi_m(t)$ such that 
\begin{equation}
P_{g(t)}\phi_i(t) = \Lambda_i(t)\phi_i(t), \text{ with } \Lambda_i(0)=\lambda_k(P_{g_e}) \text{ for all } i=1,\cdots,m;
\end{equation}
and
\begin{equation}
\int_M \phi_i(t)\phi_j(t)\;dv_{g(t)} = \delta_{ij} \text{ for all } i=1,\cdots,m.
\end{equation}
Moreover, there exists a number $\delta>0$ such that the spectrum of $P_{g(t)}$  around $\lambda_k(P_{g_e})$ consists only of $\{\Lambda_1(t),\cdots,\Lambda_m(t)\}$ for $|t|<\delta$. Since $\lambda_k(P_{g_e}) < \lambda_{k+1}(P_{g_e})$, and both $\lambda_k(P_{g(t)})$ and $\Lambda_i(t)$ are continuous, we have 
\begin{equation}\label{LE-1}
\lambda_k(P_{g(t)})=\max_{1\le i\le m} \Lambda_i(t)
\end{equation}
on $(-\delta, \delta)$. Together with the assumption of $g_e$ being a local maximizer, relation (\ref{LE-1}) gives 
\begin{equation}
\Lambda_i(t)\le \lambda_k(P_{g(t)})\le \lambda_k(P_{g_e}) = \Lambda_i(0)
\end{equation}
for $|t|<\delta'$, where $\delta'$ is possibly smaller than our original $\delta$. This implies 
\begin{equation}\label{LE-2}
    \Lambda_i'(0)=0
\end{equation}
for $i=1,\cdots,m$.

Set $\phi_i= \phi_i(0)$, and recall that the collection of eigenfunctions $\{\phi_1,\cdots,\phi_m\}$ form an orthonormal basis for the $k$-th eigenspace $E_k(P_{g_e})$. From (\ref{AnalyticEigen}) and (\ref{LE-2}) we get
\begin{equation}
\int_M \varphi \phi_i^2\;dv_{g_e}=0.
\end{equation}
Moreover, after multiplying (\ref{OSD3}) across by $\phi_j$ and integrating with respect to $dv_{g_e}$ (see proof of Proposition \ref{OSD}), we obtain
\begin{equation}
\int_M \varphi \phi_i \phi_j\;dv_{g_e} = 0 \text{   (for }i\not = j).
\end{equation}
Therefore, $\displaystyle\int_M \varphi \phi^2\;dv_{g_e} = 0$ for all $\phi\in E_k(P_{g_e})$. Since $\varphi\in A^\perp$ was arbitrary, this gives that $\phi^2=c\in \mathbb{R}$ for every $\phi\in E_k(P_g)$. However, this is a contradiction since $\lambda_k(P_{g_e})\not=0$. The proof for $(ii)$ is similar and it is therefore omitted. 
\end{proof}

\begin{corollary}\label{NoLocalMinima}
Let $M^4$ be a four-dimensional closed Riemannian manifold equipped with a conformal class for which Ker$(P_g) = \{\text{constants}\}$ and $P_g\ge 0$. Then the functional 
\begin{equation}\label{first-nonzero-functional}
g_w\in [g] \mapsto \lambda_2(g_w)\text{Vol}(M^4,g_w)
\end{equation}
has no local minimizer.
\end{corollary}
\begin{proof}
Assume on the contrary that $g_e\in [g]$ is a local minimizer for the first nonzero eigenvalue functional (\ref{first-nonzero-functional}). Then Proposition \ref{LocalExtremals} implies that $\lambda_2(P_{g_e}) = \lambda_1(P_{g_e})=0$, which contradicts our assumptions on $\text{Ker}(P_g)$. 
\end{proof}

\begin{remark}
Notice that both of our assumptions, $\text{Ker}(P_g)=\{\text{constants}\}$ and $P_g\ge 0$, are conformally invariant. Moreover, there are plenty of conformal classes satisfying these conditions. In fact, by the work of Gursky in \cite{Gursky}, if both the total Q-curvature $k([g])$ and the Yamabe invariant $Y([g])$ are nonnegative, then both conditions are satisfied by the conformal class; see \cite{Gursky2} for a stronger version of this result. 
\end{remark}

As an application of Theorem \ref{Main2}, part (i), we provide  examples of Riemannian manifolds with conformally extremal metrics on it (cf. Corollary 2.1 in \cite{Soufi}). As we mentioned in the introduction, section \ref{PaneitzIntro}, these conformally extremal metrics may or may not be global maximizers, and classifying them is another important question. 

\begin{proof}[Proof of Corollary \ref{Homogeneous}]
Let us start by choosing an orthonormal basis $\{\phi_1,\cdots,\phi_p\}$ of $E_k(P_g)$. In light of Theorem \ref{Main2}, our goal is to show that $\sum_{i=1}^p\phi_i^2$ is constant. To this end, let $G$ be a Lie group acting smoothly and transitively by isometries on $M^4$. Note that for any $\varphi \in G$, the set $\{\varphi^*\phi_1,\cdots, \varphi^*\phi_p\}$ is also an orthonormal basis for $E_k(P_g)$. For an arbitrary test function $\psi\in C^\infty(M^4)$, since the trace of the quadratic form $Q_\psi(\phi):=\displaystyle\int_M\psi \phi^2dv_g$ on $E_k(P_g)$ is basis independent, we deduce that
\begin{equation}
\int_M \psi \left(\sum_{i=1}^p \phi_i^2\right)\;dv_g = \int_M\psi \left(\sum_{i=1}^p(\varphi^*\phi_i)^2\right)\;dv_g.
\end{equation}
This implies that the function $\sum_{i=1}^p\phi_i^2$ is invariant under the isometry group $G$. Hence, it is constant and the proof is completed.
\end{proof}

\subsection{Proof of Theorem \ref{Main3}}\label{ProofKorevaar}

We define the \textit{conformal spectrum} of the Paneitz operator similar to how it is defined on closed surfaces for the Laplace-Beltrami operator (see \cite{Colbois}). 
\begin{definition}[Conformal Spectrum]
For any $k\in \mathbb{N}$, we define the $k$\textit{-th conformal eigenvalue} of $P_g$ to be
\begin{equation}
\Lambda^P_k(M^4,[g]):=\sup_{g_w\in\; C(g)} \lambda_k(P_{g_w}) =  \sup_{g_w\in [g]}\lambda_k(P_{g_w})\text{Vol}(M^4,g_w).
\end{equation}
The sequence 
\begin{equation}
\Lambda^P_1(M^4,[g])\le \Lambda^P_2(M^4,[g])\le\cdots \le \Lambda^P_k(M^4,[g])\rightarrow \infty
\end{equation}
is called the \textit{Conformal Spectrum} of $P_g$. Notice that, unlike in the case of surfaces, some conformal eigenvalues could be negative. 
\end{definition}

For conformal eigenvalues of the Laplace-Beltrami operator, it was shown by Colbois-El Soufi in \cite{Colbois} that no metric can maximize two consecutive normalized eigenvalues. This follows from Theorem B in \cite{Colbois} and Proposition 4.2 in \cite{Soufi2}. As a consequence of Proposition \ref{LocalExtremals}, we managed to prove a weaker version of this result.

\begin{corollary}\label{Consecutive}
Let $(M^4,g)$ be a four-dimensional closed Riemannian manifold. Assume that $g_o\in [g]$ is a unit volume metric that maximizes the $k$-th conformal eigenvalue, i.e.
\begin{equation}
\lambda_k(P_{g_o}) = \Lambda^P_k(M^4,[g]),
\end{equation}
and suppose that the multiplicity of $\lambda_k(P_{g_o})$ is $m$. Without loss of generality, if $k\ge 2$, then we further assume that $\lambda_{k-1}(P_{g_o}) < \lambda_{k}(P_{g_o})$. Then $g_o$ cannot maximize at least one conformal eigenvalue in the following list:
\begin{equation}
\Lambda^P_{k+1}(M^4,[g]),\cdots, \Lambda^P_{k+m-1}(M^4,[g]).
\end{equation}
In other words, $g_o$ cannot maximize more than $m-1$ consecutive eigenvalues. \end{corollary}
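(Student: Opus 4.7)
The plan is to argue by contradiction, reducing the claim to a single application of Proposition \ref{LocalExtremals} (i). Suppose, aiming for a contradiction, that $g_o$ is a simultaneous maximizer of all the conformal eigenvalues
\[
\Lambda_k(M^4,[g]),\ \Lambda_{k+1}(M^4,[g]),\ \ldots,\ \Lambda_{k+m-1}(M^4,[g]).
\]
The hypothesis $\lambda_{k-1}(P_{g_o}) < \lambda_k(P_{g_o})$ together with the assumption that the multiplicity of $\lambda_k(P_{g_o})$ equals exactly $m$ forces the spectral pattern
\[
\lambda_{k-1}(P_{g_o}) < \lambda_k(P_{g_o}) = \lambda_{k+1}(P_{g_o}) = \cdots = \lambda_{k+m-1}(P_{g_o}) < \lambda_{k+m}(P_{g_o}),
\]
where the final strict inequality is exactly what ``multiplicity $m$'' means.

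The main step is then to apply Proposition \ref{LocalExtremals} (i) to the functional $g_w \in C \longmapsto \lambda_{k+m-1}(P_{g_w})$. Since every global maximizer on $C$ is a fortiori a local maximizer, $g_o$ qualifies, and the value $\lambda_{k+m-1}(P_{g_o}) = \lambda_k(P_{g_o}) = \Lambda_k(M^4,[g])$ is nonzero in the only case that is of interest (the statement is vacuous otherwise). The proposition then yields
\[
\lambda_{k+m-1}(P_{g_o}) = \lambda_{k+m}(P_{g_o}),
\]
which directly contradicts the strict inequality $\lambda_{k+m-1}(P_{g_o}) < \lambda_{k+m}(P_{g_o})$ produced by the multiplicity assumption. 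Hence $g_o$ fails to maximize at least one of $\Lambda_{k+1}(M^4,[g]),\ldots,\Lambda_{k+m-1}(M^4,[g])$.

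There is essentially no obstacle: once the spectral structure implied by ``multiplicity exactly $m$'' is written down, the corollary is a one-line consequence of Proposition \ref{LocalExtremals} (i). The only mild technical points are the passage from ``global maximizer of $\Lambda_{k+m-1}$'' to ``local maximizer of the functional in (\ref{C-PFunctional}) at index $k+m-1$''—immediate—and the non-vanishing hypothesis $\lambda_{k+m-1}(P_{g_o}) \neq 0$ needed to invoke the proposition, which is the natural regime for the statement.
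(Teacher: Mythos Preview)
Your argument is correct and follows essentially the same route as the paper: assume $g_o$ maximizes $\Lambda_{k+m-1}(M^4,[g])$, then apply Proposition~\ref{LocalExtremals}(i) at index $k+m-1$ to force $\lambda_{k+m-1}(P_{g_o}) = \lambda_{k+m}(P_{g_o})$, contradicting the multiplicity hypothesis. The paper phrases this as a case split (``if $g_o$ fails to maximize one of $\Lambda_{k+1},\ldots,\Lambda_{k+m-2}$ we are done; otherwise \ldots'') rather than a single contradiction, but the substance is identical, and your observation about the needed hypothesis $\lambda_{k+m-1}(P_{g_o})\neq 0$ is in fact slightly more explicit than the paper's own treatment.
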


\begin{proof}
We first explain one of our assumptions. Notice that if $l<k$ and $\lambda_l(P_{g_o}) = \lambda_k(P_{g_o})$, then 
\begin{equation}
    \lambda_l(P_{g_o})=\Lambda^P_l(M^4,[g]).
\end{equation}
Therefore, we are simply assuming that $\Lambda^P_k(M^4,[g])$ is the first one appearing on the list of possibly repeated conformal eigenvalues.

If $g_o$ does not maximize at least one of the conformal eigenvalues in 
\begin{equation}
\Lambda^P_{k+1}(M^4,[g]),\cdots, \Lambda^P_{k+m-2}(M^4,[g]),
\end{equation}
then we are done. Otherwise, the proof reduces to show that 
\begin{equation}\label{Consecutive-1}
\lambda_{k+(m-1)}(P_{g_o}) < \Lambda^P_{k+(m-1)}(M^4,[g]).
\end{equation}
Indeed, if equality holds in (\ref{Consecutive-1}), then $g_o$ would be a global maximizer. Therefore, $\lambda_{k+(m-1)}(P_{g_o}) = \lambda_{k+m}(P_{g_o})$ by Proposition \ref{LocalExtremals}, contradicting our assumption on the multiplicity of $\lambda_k(P_{g_o})$. This completes the proof.
\end{proof}

We would like to explain how Corollary \ref{Consecutive} could be improved. For simplicity, let us assume that we are given a conformal class $[g]$ for which $P_g$ is nonnegative ($N([g])=0$) and $\text{Ker}(P_g) = \{\text{constants}\}$. As explained in the introduction, section \ref{PaneitzIntro}, these conditions are conformally invariant. Suppose that we can find a uniform constant $A>0$ such that
\begin{equation}\label{Consecutive-2}
    \Lambda^P_{k+1}(M^4,[g])^a - \Lambda^P_k(M^4,[g])^a \ge A
\end{equation}
for some $a>0$. Then, under the assumptions of Corollary \ref{Consecutive}, we would be able to conclude that $g_o$ does not maximize $\Lambda^P_{k+1}(M^4,[g])$. In other words, there would be no metric maximizing consecutive conformal eigenvalues. Indeed, if $g_o$ maximizes $\Lambda^P_k(M^4,[g])$, then Corollary \ref{LocalExtremals} implies $\Lambda^P_k(M^4,[g]) = \lambda_k(P_{g_o})=\lambda_{k+1}(P_{g_o})$. Since by (\ref{Consecutive-2}) there would be a positive gap between any two consecutive conformal eigenvalues, we would therefore conclude that $\lambda_{k+1}(P_{g_o}) < \Lambda^P_{k+1}(M^4,[g])$. 

An inequality of the form (\ref{Consecutive-2}) is known to be true in the case of the Laplace-Beltrami operator with $a=\frac{\text{dim}(M^n)}{2}$ and $A$ equal to the first non-trivial conformal eigenvalue on the round sphere; see Theorem B in \cite{Colbois} for more details.

Our next goal is to prove Theorem \ref{Main3}. Recall that the min-max characterization for the eigenvalues of $P_{g_w}$ is given by
\begin{equation}\label{min-maxP}
\lambda_k(P_{g_w})= \inf_{V\in Gr_{k}(C^\infty(M^4))} \sup_{\phi \in V\setminus \{0\}} R_{g_w}(\phi),
\end{equation}
where $Gr_k(C^\infty(M^4))$ is the space of all $k$-dimensional subspaces of $C^\infty(M^4)$, and $R_{g_w}(\phi)$ is the associated Rayleigh quotient defined by 
\begin{equation}\label{RQ-P1}
R_{g_w}(\phi) = \frac{\displaystyle\int_M \phi P_{g_w} \phi \;dv_{g_w}}{\displaystyle \int_M \phi^2\;dv_{g_w}} = \frac{\displaystyle\int_M\phi P_g\phi\;dv_g}{\displaystyle\int_M \phi^2 \;dv_{g_w}}.
\end{equation}

The terminology introduced in the next definition is needed to understand Hassannezhad's result, Lemma \ref{Asma} below.

\begin{definition}
Let $(X,d,\mu)$ be a complete and locally compact metric-measure space with a metric d and a non-atomic measure $\mu$.
\begin{enumerate}[(i)]
\item (Local covering property) Given $\kappa>1$, $\rho>0$ and $N\in\mathbb{N}$, we say that $(X,d)$ satisfies the \textit{$(\kappa,N;\rho)$-covering property} if each ball of radius $0<r\le\rho$ can be covered by at most $N$ balls of radius $\frac{r}{\kappa}$.
\item For any $x\in X$ and $0\le r\le R$, we define the \textit{annulus} $A(x,r,R)$ as the set
\begin{equation}
A(x,r,R):=\{y\in X: r<d(x,y)\le R\}
\end{equation}
The set $2A(x,r,R)$ is defined to be $A\left(x,\frac{r}{2},2R\right)$.
\end{enumerate}
\end{definition}

\begin{lemma}[Proposition 2.1 in \cite{Hassannezhad}]\label{Asma}
Let $(X,d,\mu)$ be a metric-measure space satisfying the $(2,N; 1)$-covering property, with $X$ compact and $\mu$ a non-atomic finite measure. Then there exists a constant $k_X$, depending on the covering property, such that for all $n>k_X$ there exists a family of annuli $\mathcal{A} = \{A_i\}_{i=1}^n$ with the following properties:
\begin{enumerate}[(i)]
\item $\mu(A_i)\ge v:= \displaystyle\frac{\mu(X)}{8c^2n}$, where $c$ is a positive constant depending on the covering property;
\item the family $\{2A_i\}_{i=1}^n$ is disjoint; and
\item the outer radius of each member in $\mathcal{A}$ is less than $1$.
\end{enumerate}
\end{lemma}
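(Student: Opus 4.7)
The plan is to adapt a Grigor'yan--Netrusov--Yau style argument, turning the covering hypothesis into a dyadic decomposition of $X$ and then refining from balls to annuli. I would carry it out in four steps.

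First, iterate the $(2,N;1)$-covering property to obtain a metric packing bound: any ball of radius $r\le 1$ can be covered by $N^k$ balls of radius $r/2^k$, so the number of pairwise disjoint balls of radius $r$ fitting inside a ball of radius $R\le 1$ is bounded by a constant $c=c(N)$ independent of the centre. This $c$ is exactly the constant appearing in the statement.

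Second, I would build a dyadic hierarchy on $X$: cover $X$ by finitely many balls of radius $1$ and iteratively subdivide using Step 1. For the given $n$, choose the scale $j=j(n)$ so that the covering of $X$ by balls of radius $2^{-j}$ has cardinality comparable to $n$; the threshold $k_X$ in the statement is precisely the first $n$ for which such a scale is available with $2\cdot 2^{-j}<1$. A pigeonhole over mass then produces a subcollection of at least $n/c$ balls each carrying mass at least $\mu(X)/(2cn)$.

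Third, upgrade these balls to annuli by using non-atomicity. For each selected centre $x_\alpha$, consider the concentric dyadic annuli $A(x_\alpha, 2^{-j-k-1}, 2^{-j-k})$ for $k\ge 0$. Since $\mu$ is non-atomic, these annuli cover $B(x_\alpha, 2^{-j})$ up to the singleton $\{x_\alpha\}$, so their masses sum to $\mu(B(x_\alpha, 2^{-j}))$ and at least one has mass bounded below by a uniform fraction of $\mu(B(x_\alpha,2^{-j}))$. A further pigeonhole over $k$ forces a common scale across at least $n/c$ of the centres. Non-atomicity is essential here — otherwise all mass could collapse to the centre and every annulus would be empty.

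Finally, I would enforce disjointness of the doubled annuli via a Vitali/greedy selection: order the remaining annuli by decreasing measure and, when choosing one, discard every annulus whose doubling meets the doubling of the chosen one. Step 1 bounds the number of discarded competitors per selection by $c^2$, so a fraction $1/c^2$ of the annuli survives, yielding the final lower bound $v=\mu(X)/(8c^2n)$ after tracking constants; the outer radii remain bounded by $2\cdot 2^{-j}<1$. The main technical obstacle is Step 3: simultaneously extracting a uniform annular scale across many centres while preserving the mass lower bound — this is the delicate interplay between non-atomicity and the covering data that does most of the work.
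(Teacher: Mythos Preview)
The paper does not prove this lemma at all: it is quoted verbatim as Proposition~2.1 of Hassannezhad \cite{Hassannezhad} and then only commented on as ``a refinement of a result due to Grigor'yan--Netrusov--Yau.'' So there is no in-paper proof to compare your proposal against.

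That said, your sketch has a genuine gap in Step~3. You claim that among the dyadic annuli $A(x_\alpha,2^{-j-k-1},2^{-j-k})$, $k\ge 0$, ``at least one has mass bounded below by a uniform fraction of $\mu(B(x_\alpha,2^{-j}))$.'' This is false in general: an infinite family of annuli can exhaust the mass of the ball with no single annulus carrying any fixed proportion (take $\mu(A_k)=2^{-k-1}\mu(B)$). The follow-up ``pigeonhole over $k$ forces a common scale across at least $n/c$ of the centres'' then also fails, since $k$ ranges over an infinite set. Non-atomicity by itself does not rescue this step; it only guarantees that the mass of $\{x_\alpha\}$ is zero, not that the mass is spread across scales in any uniform way.

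The actual Grigor'yan--Netrusov--Yau mechanism (which Hassannezhad refines) does not first pick balls and then convert them to annuli. Instead one works directly with the continuous map $r\mapsto \mu(B(x,r))$, using non-atomicity to locate radii at which the mass crosses prescribed thresholds, and runs an iterative selection that builds the annuli and enforces the disjointness of their doublings \emph{simultaneously}. The covering property enters to bound how many competitors are discarded at each stage. Your Steps~1, 2, and~4 are in the right spirit, but the decoupling of ``find mass'' from ``find annular shape'' in Step~3 is where the argument breaks; this is exactly the place where the GNY construction is delicate, and a two-phase approach of the kind you outline does not seem to close.
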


Lemma \ref{Asma} is a refinement of a result due to Grigor'Yan-Netrusov-Yau in \cite{Grigor'yan}. Roughly speaking, it establishes that if the measure $\mu$ has no atoms, then we can decompose the space $X$ into a finite family of disjoint annuli containing ``enough measure". 

\begin{remark}\label{cutlocus}
It is important to point out that by taking $n$ large enough, we can assume that the outer radius of the annuli is smaller than any given $\delta>0$. In particular, for an annulus center at $p$, we can assume that the outer boundary stays away from the cut locus of $p$.
\end{remark}
 
\begin{proof}[Proof of Theorem \ref{Main3}]
Let $d_{g_o}$ be the distance function induced by some background metric $g_o\in [g]$. As explained in Hassannezhad's work \cite{Hassannezhad}, the metric space $(M,d_{g_o})$ satisfies the $(2,N;1)$-covering property for some $N\in\mathbb{N}$ depending only on the dimension of the manifold. Let $g_w\in[g]$ be arbitrary. We apply Lemma \ref{Asma} to the metric-measure space $(M^4, d=d_{g_o}, \mu=dv_{g_w})$ with $n\ge \max\{k,k_M\}$ to get a family $\mathcal{A} = \{A_i\}_{i=1}^n$ of annuli satisfying properties $(i), (ii)$, and $(iii)$ above. 

Select a smooth function $\phi_i \in C^\infty(M^4)$ satisfying the following properties: (i) $\phi_i \equiv 1$ on $A_i = A(x_i,r_i,R_i)$ and $0\le\phi_i\le1$ everywhere; (ii) supp$(\phi_i)\subset 2A_i$; (iii) $|\nabla_{g_o}\phi_i| = O(\frac{1}{r})$ and $|\Delta_{g_o}\phi_i|=O(\frac{1}{r^2})$ on $A(x_i, \frac{r_i}{2},r_i)$; and (iv) $|\nabla_{g_o}\phi_i| = O(\frac{1}{R})$ and $|\Delta_{g_o}\phi_i|=O(\frac{1}{R^2})$ on $A(x_i, R_i,2R_i)$ (see Remark \ref{cutlocus}). Indeed, select smooth functions $f$ and $\tilde f$, with values between $0$ and $1$, such that the following holds:
\[
f\equiv 1\;\text{on }B_{g_o}(x_i,R_i);\; \text{supp}(f)\subseteq B_{g_o}(x_i,2R_i);\; |\nabla_{g_o}f|\le \frac{C}{R_i};\; |\Delta_{g_o}f|\le \frac{C}{R_i^2},
\]
and 
\[
\tilde f\equiv 1\;\text{on }B_{g_o}(x_i,r_i)^c;\; \text{supp}(\tilde f)\subseteq B_{g_o}\left(x_i,\frac{r_i}{2}\right)^c;\; |\nabla_{g_o}\tilde f|\le \frac{C}{r_i};\; |\Delta_{g_o}\tilde f|\le \frac{C}{r_i^2},
\]
where the constant $C$ denotes a generic constant that depends only on the background metric $g_o$. We can then check that $\phi_i = f\tilde f$ satisfies the desired properties.

For any of the functions $\phi_i$, it follows that the numerator in the Rayleigh quotient of $R_{g_w}(\phi_i)$ is estimated by a constant $C$ depending only of $g_o$:
\begin{equation} 
\int_M \phi_iP_{g_o}\phi_i\;dv_{g_o}\le C.
\end{equation}
The Rayleigh quotient with respect to the metric $g_w$ can now be estimated as follows:
\begin{equation}
R_{g_w}(\phi_i) = \frac{\displaystyle \int_M \phi_iP_{g_o}\phi_i\;dv_{g_o}}{\displaystyle \int_M \phi_i^2\;dv_{g_w}} \le C\cdot \frac{8c^2n}{\text{Vol}(M^4,g_w)}  = \frac{Cn}{\text{Vol}(M^4,g_w)}.
\end{equation}
Using the min-max characterization (\ref{min-maxP}) for $P_{g_w}$, we deduce
\begin{equation}
\lambda_k(P_{g_w})\text{Vol}(M^4,g_w)\le Cn.
\end{equation}
Since $g_w$ is arbitrary, the result follows. 
\end{proof}

\subsection{Hersch's type inequalities on $(\mathbb{S}^4, g_r)$}\label{Sphere}
This section is devoted to the proof of Theorems \ref{Main4} and \ref{third-eigen}. 
\subsubsection{The second eigenvalue}
Recall that by Corollary \ref{Homogeneous}, if $(M^4,g)$ is a homogeneous manifold, then $g$ would be conformally extremal for some eigenvalues. In particular, the round metric $g_r$ on $\mathbb{S}^4$ is conformally extremal for the normalized second eigenvalue functional. This is because $P_{g_r}$ is nonnegative with $\text{Ker}(P_{g_r}) = \{\text{constants}\}$, and therefore $\lambda_2(P_{g_w})>\lambda_1(P_{g_w}) = 0$ for every $g_w\in[g_r]$. Theorem \ref{Main4} says that $g_r$ is in fact maximal.

\begin{proof}[Proof of Theorem \ref{Main4}]

For any $\tilde g\in [g_r]$, we will use the following characterization for $\lambda_2(P_{\tilde g})$:
\begin{equation}\label{minmax-P2}
\lambda_2(P_{\tilde g}) = \inf_{\varphi \in C^\infty(\mathbb{S}^4);\; \int_{\mathbb{S}^4}\varphi\;dv_{\tilde g} = 0} R_{\tilde g}(\varphi).
\end{equation}
Let $\{x_1,\cdots,x_5\}$ be the standard coordinates on $\mathbb{R}^5$, and let $g_w = e^{2w}g_r$ be any metric in $[g_r]$ with volume equal to $\omega_4$. Using a balancing argument, we can find a conformal diffeomorphism $\varphi : \mathbb{S}^4\rightarrow \mathbb{S}^4$ such that for each $i=1,\cdots,5$,
\begin{equation}
\int_{\mathbb{S}^4} x_i\;dv_{\varphi^*g_w} = \int_{\mathbb{S}^4} e^{4w_\varphi}x_i\;dv_{g_r} =0, 
\end{equation}
where $\varphi^*g_w = e^{2w_\varphi}g_r$. From the variational characterization of $\lambda_2(P_{g_w})$ and the conformal invariance (\ref{energy}), we obtain
\begin{equation}\label{Main4-1}
\lambda_2(P_{\varphi^*g_w}) \int_{\mathbb{S}^4} x_i^2\;dv_{\varphi^*g_w} \le \int_{\mathbb{S}^4} x_iP_{\varphi^*g_w}x_i\;dv_{\varphi^*g_w} = \int_{\mathbb{S}^4} x_iP_{g_r}x_i\;dv_{g_r}.
\end{equation}
Summing up over $i$ gives
\begin{equation}\label{Main4-2}
\lambda_2(P_{\varphi^*g_w})\text{Vol}(\mathbb{S}^4,\varphi^*g_w) \le \sum_{i=1}^5 \int_{\mathbb{S}^4} x_iP_{g_r}x_i\;dv_{g_r}.
\end{equation}

Recall by (\ref{PSphere}) that the Paneitz operator with respect to the round metric $g_r$ reduces to
\begin{equation}
P_{g_r} = \Delta_{g_r}^2 - 2\Delta_{g_r}.
\end{equation}
Since $\Delta_{g_r}x_i = -4x_i$ for each $i=1,\cdots,5$, then
\begin{equation}
\begin{split}
0 &= \frac{1}{2}\Delta_{g_r}(1) =\frac{1}{2} \Delta_{g_r}\left(\sum_{i=1}^5 x_i^2\right) = \sum_{i=1}^5 \left(-4x_i^2 + |\nabla_{g_r} x_i|_{g_r}^2\right)\\ &= -4 + \sum_{i=1}^5|\nabla_{g_r} x_i|_{g_r}^2,
\end{split}
\end{equation}
Therefore, the right-hand side of (\ref{Main4-2}) equals
\[
\begin{split}
\sum_{i=1}^5 \int_{\mathbb{S}^4} x_iP_{g_r}x_i\;dv_{g_r} &= \sum_{i=1}^5\int_{\mathbb{S}^4} \left\{(\Delta_{g_r}x_i)^2 + 2|\nabla_{g_r}x_i|^2_{g_r}\right\}\;dv_{g_r} \\ &= \sum_{i=1}^5\int_{\mathbb{S}^4} \left\{16x_i^2 + 2|\nabla_{g_r}x_i|^2_{g_r} \right\}\;dv_{g_r} = \int_{\mathbb{S}^4} 24\;dv_{g_r} = 24\omega_4.
\end{split}
\]
Since the metrics $g_w$ and $\varphi^*g_w$ are isometric, we conclude from (\ref{Main4-2}) that
\begin{equation}
\lambda_2(P_{g_w}) \le 24.
\end{equation}

It remains to discuss the equality case. If $\lambda_2(P_{\varphi^*g_w})=24$, then from (\ref{Main4-1}) and (\ref{Main4-2}) we deduce that the coordinate functions $x_1,\cdots,x_5$ are also eigenfunctions for $P_{\varphi^*g_w}$ with a corresponding eigenvalue equal to $24$. Using the conformal properties (\ref{ConfTransP}) of $P_{\varphi^*g_w}$, we observe
\begin{equation}
e^{-4w_\varphi}(24x^i)=e^{-4w_\varphi}P_{g_r}x_i = P_{\varphi^*g_w}x_i = 24x_i.
\end{equation} 
Since this holds for every $i=1,\cdots,5$, the function $e^{4w}$ is everywhere equal to $1$. Therefore, $\varphi^*g_w = e^{2w_\varphi} g_r = g_r$, and hence both metrics are isometric. 
\end{proof}

\subsubsection{The third eigenvalue}

Note that the multiplicity of $\lambda_2(P_{g_r})$ is $5$. As a consequence of Corollary \ref{Consecutive}, we know that the round metric $g_r$ on $\mathbb{S}^4$ cannot maximize more than $4$ consecutive conformal eigenvalues. We state this observation as a proposition.

\begin{proposition}\label{Main5}
Let $g_r$ be the standard round metric on $\mathbb{S}^4$. Then $g_r$ cannot maximize at least one of the conformal eigenvalues in the following list: 
$\Lambda^P_3(\mathbb{S}^4,[g_r])$, $\Lambda^P_4(\mathbb{S}^4,[g_r])$, $\Lambda^P_5(\mathbb{S}^4,[g_r])$, or $\Lambda^P_6(\mathbb{S}^4,[g_r])$.
\end{proposition}
In fact, we expect that the value for any higher order conformal eigenvalue (i.e. $k\ge 3$) cannot be attained by a smooth metric in $[g_r]$ (cf. Theorem 1.2 in \cite{Karpukhin3}). Theorem \ref{third-eigen} is the first step in proving this conjecture. 

\begin{proof}[Proof of Theorem \ref{third-eigen}]
The proof uses a construction of trial functions, i.e. functions orthogonal to the constants and the coordinate functions, as done originally in the works of Nadirashvili (\cite{Nadirashvili4}) and Petrides (\cite{Petrides3}). We will follow mostly the notation used in \cite{Kim}.

Let us start by setting up some notation. Consider the following family $T_\xi:\mathbb{B}^5\rightarrow \mathbb{B}^5$ of Mobi\"us transformations parametrized by $\xi \in \mathbb{B}^5\subseteq \mathbb{R}^5$, where $\mathbb{B}^5$ denotes the closed unit ball in $\mathbb{R}^5$:
\begin{equation}
T_\xi(y) = \frac{(1+2\xi\cdot y+ |y|^2)\xi + (1-|\xi|^2)y}{1+2\xi\cdot y+|\xi|^2|y|^2}.
\end{equation}
These are conformal transformations when restricted to $\mathbb{S}^4$. On the other hand, for any $(p,t)\in \mathbb{S}^4\times [0,1)$, consider the family of spherical caps defined by 
\begin{equation}
H_p = H_{p,0} := \{y\in \mathbb{S}^4: y\cdot p \le 0\},
\end{equation}
and
\begin{equation}
H_{p,t}:= T_{pt}(H_{p,t}).
\end{equation}
Let $R_p(y) = y - 2(y\cdot p)p$, $y\in \mathbb{S}^4$ denote the reflection across the hyperplane through the origin and orthogonal to $p\in \mathbb{S}^4$, and denote by $R_{H_{p,t}} = T_{pt}\circ R_p\circ T_{-pt}: \mathbb{S}^4\rightarrow \mathbb{S}^4$ the conformal reflection across the boundary of $H_{p,t}$. Finally, the folding $F_{H_{p,t}}$w.r.t. $H_{p,t}$ is defined by 
\begin{equation}
F_{H_{p,t}}(y) =
\begin{cases}
y & y \in H_{p,t}\\
R_{H_{p,t}}(y) & y\in \mathbb{S}^4\setminus H_{p,t}.
\end{cases}
\end{equation}
The result follows from the following lemma:
\begin{lemma}[Proposition 8 in \cite{Kim}]
Let $f$ be an eigenfunction associated with $\lambda_2(P_{g_w})$. Then there exists a point $(p,t)\in \mathbb{S}^4\times [0,1]$ and $c_{H_{p,t}}\in \mathbb{B}^5$ such that 
\begin{equation}
\int_{\mathbb{S}^4} (x_i\circ T_{-c_{H_{p,t}}}\circ F_{H_{p,t}})(y)f(y)\;dv_{g_w} = \int_{\mathbb{S}^4} (x_i\circ T_{-c_{H_{p,t}}}\circ F_{H_{p,t}})(y)\;dv_{g_w} = 0.
\end{equation}
for each $i=1,\cdots, 5$.
\end{lemma}

Using the variational characterization of $\lambda_3(P_{g_w})$, we then have
\[
\begin{split}
\lambda_3(P_{g_w})\int_{\mathbb{S}^4} (x_i\circ &T_{-c_{H_{p,t}}}\circ F_{H_{p,t}})^2\;dv_{g_w}\\ &\le \int_{\mathbb{S}^4} (x_i\circ T_{-c_{H_{p,t}}}\circ F_{H_{p,t}})P_{g_r}(x_i\circ T_{-c_{H_{p,t}}}\circ F_{H_{p,t}})\;dv_{g_r}.
\end{split}
\]
Summing over $i=1,\cdots, 5$ gives
\[
\begin{split}
\lambda_3(P_{g_w})\cdot \omega_4 \le& \sum_{i=1}^5\int_{\mathbb{S}^4}(x_i\circ T_{-c_{H_{p,t}}}\circ F_{H_{p,t}})P_{g_r}(x_i\circ T_{-c_{H_{p,t}}}\circ F_{H_{p,t}})\;dv_{g_r} \\ = &\sum_{i=1}^5 \int_{H_{p,t}} (x_i\circ T_{-c_{H_{p,t}}}\circ F_{H_{p,t}})P_{g_r}(x_i\circ T_{-c_{H_{p,t}}}\circ F_{H_{p,t}})\;dv_{g_r} \\&+\sum_{i=1}^5\int_{\mathbb{S}^4\setminus H_{p,t}} (x_i\circ T_{-c_{H_{p,t}}}\circ F_{H_{p,t}})P_{g_r}(x_i\circ T_{-c_{H_{p,t}}}\circ F_{H_{p,t}})\;dv_{g_r} \\ \le& 2\sum_{i=1}^5\int_{\mathbb{S}^4} x_iP_{g_r}x_i\;dv_{g_r} = 2\cdot \lambda_2(P_{g_r})\cdot \omega_4 = 2\cdot 24\cdot \omega_4.
\end{split}
\]
This finishes the proof. 
\end{proof}

\section{Extremal metrics on $\mathcal{R}(M^4)$}\label{ExtremalMetrics}

We turn our attention to extremal metrics for the normalized eigenvalue functional, but over the space of all Riemannian metrics. We will refer to critical points of the normalized eigenvalue functional 
\begin{equation}
g\in \mathcal{R}(M^4)\mapsto \lambda_k(P_g)\text{Vol}(M^4,g)
\end{equation}
as \textit{extremal metrics} (see Definition \ref{Extremal}).

First, we briefly summarized what is known for laplacian eigenvalues on closed surfaces. 
\begin{enumerate}
\item By results in \cite{Yang, Korevaar, Karpukhin}, we know that 
\[
\sup_{g\in \mathcal{R}(M^2)}\lambda_k(-\Delta_g)\text{Area}(M^2,g)<\infty
\]
for each $k\in \mathbb{N}$, where $\mathcal{R}(M^2)$ is the space of all Riemannian metrics. 

\item If $g_e\in \mathcal{R}(M^2)$ is extremal for the normalized eigenvalue functional $g$ $\in \mathcal{R}(M^2)\mapsto \lambda_k(-\Delta_g)\text{Area}(M^2,g)$, then there exists a collection $\{\phi_1,\cdots, \phi_p\}$ $\subseteq E_k(-\Delta_g)$ such that $\sum_{i=1}^p\phi^2$ is constant, and $\sum_{i=1}^p d\phi_i\otimes d\phi_i = g$ (\cite{Soufi2}) (see Theorem 3.1 in \cite{Soufi2}). This last condition means that $\Phi$ is an isometric minimal immersion into $\mathbb{S}^{p-1}$ (\cite{Takahashi}).

\item If either $\lambda_k(-\Delta)>\lambda_{k-1}(-\Delta_g)$ or $\lambda_k(-\Delta_g)<\lambda_{k+1}(-\Delta_g)$, then the necessary conditions in (2) are also sufficient for the existence of extremal metrics (see Theorem 3.1 in \cite{Soufi2}). 
\end{enumerate}

Theorem \ref{E-Main}, stated in the introduction, provides the analog for the Paneitz operator on closed four-manifolds of items (2) and (3) above. The proof of Theorem \ref{E-Main} is contained in Proposition \ref{E-nec} and Proposition \ref{E-suf}. The corresponding result to (1) for the Paneitz operator is still not available, and it would be interesting to investigate if such topological bounds can be found in this case. The author plans to investigate this and related questions in future works. 

Finally, we would like to point out that some of the results stated for conformally extremal metrics can be deduced as a consequence of the propositions in this section; see Remark \ref{Gen-to-Conf} for instance. We have decided, however, to keep both sections separate as some readers might be interested only in the conformal case, and the results in this section require familiarity with tensorial calculations. 

\subsection{Proof of Theorem \ref{E-Main}}

Let $g(t) \in \mathcal{R}(M^4)$ be any one-parameter family of metrics which is analytic in a neighborhood of $t=0$, and set $g=g(0)$. 

\begin{proposition}\label{E-OSD}
For any such analytic perturbation $g(t)\in \mathcal{R}(M^4)$ of $g$, the one-sided derivatives of $t\mapsto \lambda_k(P_{g(t)})$ at $t=0$ always exist. Moreover, we have the same formulas as in Proposition \ref{OSD}; that is, if $\lambda_k(P_g)>\lambda_{k-1}(P_g)$, then (\ref{OSD+1}) and (\ref{OSD-1}) hold, while if $\lambda_k(P_g)<\lambda_{k+1}(P_g)$, then (\ref{OSD+2}) and (\ref{OSD-2}) hold. 
\end{proposition}

\begin{proof}
Notice that the first part (existence of one-sided derivatives and formulas (\ref{OSD+1}), (\ref{OSD-1}), (\ref{OSD+2}), and (\ref{OSD-2})) in the proof of Proposition \ref{OSD} does not use in any manner that the perturbation is conformal. Therefore, the arguments also work in this case. 
\end{proof}

Proposition \ref{E-OSD} allow us to define critical points for the normalized eigenvalue functional 
\begin{equation}
g\in \mathcal{R}(M^4) \mapsto \lambda_k(P_g)\text{Vol}(M^4,g).
\end{equation}

\begin{definition}\label{Extremal}
A metric $g\in \mathcal{R}(M^4)$ is said to be \textit{extremal for $\lambda_k$} if and only if for any volume-preserving perturbation $g(t)\in \mathcal{R}(M^4)$ of $g$ which is analytic in a neighborhood of $t=0$, we have 
\begin{equation}
\frac{d}{dt}\lambda_k(P_{g(t)})|_{t=0^+}\cdot \frac{d}{dt}\lambda_k(P_{g(t)})|_{t=0^-}\le 0.
\end{equation}
\end{definition}

Suppose $g(t)\in \mathcal{R}(M^4)$ is an analytic perturbation of $g$, set $h=\frac{d}{dt}g(t)|_{t=0}$, and let $m$ be the multiplicity of $\lambda_k(P_g)$. Following the notation of Proposition \ref{RKC}, we claim that $\{\Lambda_1'(0),\cdots, \Lambda_m'(0)\}$ are the eigenvalues of the operator $\Pi_k \circ P_g'$, where $\Pi_k$ is the projection onto $E_k(P_g)$ and $P_g'=\frac{d}{dt}P_{g(t)}|_{t=0}$ is the linearization of $P_g$ with respect to $h$. 

\begin{proposition}\label{E-quadraticform}
The operator $\phi\in E_k(P_g)\mapsto Q_h(\phi) := \displaystyle\int_M \phi P_g'\phi\;dv_g$ defines a quadratic form on $E_k(P_g)$ given by
\begin{equation}
Q_h(\phi) = -\int_M \left\langle h, \tau_g(\phi) + \frac{1}{2}\lambda_k(P_g)\phi^2g\right\rangle \;dv_g,
\end{equation}
where $\tau_g(\phi)$ is the trace-free covariant two-tensor
\begin{equation}\label{Two-Tensor}
\begin{split}
\tau_g(\phi) :=& -2d(\Delta_g \phi)\otimes d\phi + (\Delta_g\phi)^2g+\langle \nabla_g\Delta_g\phi, \nabla_g\phi\rangle g\\&-\frac{2}{3} \nabla (d|\nabla_g\phi|^2) + \frac{2}{3}\Delta_g|\nabla_g\phi|^2g+\frac{2}{3}|\nabla_g\phi|^2\text{Ric}_g + \frac{2}{3}R_gd\phi\otimes d\phi \\ &- \Delta(d\phi\otimes d\phi) +2\nabla(\delta(d\phi \otimes d\phi)) - \delta^2(d\phi\otimes d\phi) g \\ &- 2Rm(\nabla_g\phi, \cdot,\nabla_g\phi,\cdot)- 2\text{Ric}_g(\nabla_g\phi,\cdot)\otimes d\phi -\frac{1}{2}E_g(\phi,\phi)g,
\end{split}
\end{equation}
and
\begin{equation}
E_g(\phi,\varphi)=(\Delta_g\phi)(\Delta_g\varphi)+ \frac{2}{3}R_g\langle \nabla_g \phi,\nabla_g\varphi\rangle - 2\text{Ric}_g(\nabla_g \phi, \nabla_g\varphi).
\end{equation}
Moreover, $\{\Lambda_1'(0),\cdots, \Lambda_m'(0)\}$ are the eigenvalues of $\Pi_k\circ P_g'$:
\begin{equation}\label{E-quadraticform1}
Q_h(\phi)=\int_M\phi_iP_g'\phi_i\;dv_g = \Lambda_i'(0)
\end{equation}
for each $i\in \{1,\cdots, m\}$. 
\end{proposition}

\begin{proof} We divide the proof into three steps: 

\noindent{\bf Step 1:} Here we prove that the operator
\begin{equation}
(\phi, \varphi)\in E_k(P_g)\mapsto \int_M\phi P_g'\varphi \;dv_g
\end{equation}
defines a quadratic symmetric form on $E_k(P_g)$; that is, the operator $\Pi_k\circ P_g'$ is symmetric on $L^2(M^4,g)$. 

We first need the following lemma about the linearization of different geometrically defined quantities. The reader can consult the notes by J. Viaclovsky in \cite{Viaclovsky} where most of the details can be found. 
\begin{lemma}\label{linearization}
The linearization of the Ricci curvature is given by 
\begin{equation}
\begin{split}
(\text{Ric}_g')_{ab}:= \frac{d}{dt}(\text{Ric}_{g(t)})_{ab}|_{t=0} &= \frac{1}{2}[-(\Delta h)_{ab} + \nabla_a(\delta h)_b + \nabla_b(\delta h)_a\\ & -\nabla_a\nabla_b(tr_gh)] -g^{kp}g^{lq}R_{akbl}h_{pq}\\ &+ \frac{1}{2} g^{sp}(\text{Ric}_g)_{ap} h_{bs} + \frac{1}{2} g^{sp}(\text{Ric}_g)_{bp} h_{as},
\end{split}
\end{equation}
where $\Delta$ and $\delta$ denotes the rough laplacian and divergence operator\footnote{For instance, $(\delta h)_a = g^{kl}\nabla_kh_{la}$. In particular, there is no minus sign in front.} acting on tensors, respectively; $R_{akbl}$ denotes the components of the Riemannian curvature tensor $Rm$, and $\nabla$ is the covariant derivative induced by $g$ on tensor bundles. The linearization of the scalar curvature is given by 
\begin{equation}
R_g':=\frac{d}{dt}R_{g(t)}|_{t=0} = -\Delta_g(tr_gh) +\delta^2 h -\langle \text{Ric}_g,h\rangle,
\end{equation}
where $\langle \cdot, \cdot\rangle$ is the inner product induced by $g$ on the space $S^2(T^*M)$ of covariant two-tensors. The linearization of the Laplace-Beltrami operator acting on a function $\phi\in C^\infty(M^4)$ is
\begin{equation}
\Delta_g'\phi:=\frac{d}{dt} \Delta_{g(t)}|_{t=0} \phi= -\langle h,\nabla(d\phi)\rangle - \langle \delta h, d\phi\rangle + \frac{1}{2}\langle d(tr_gh), d\phi\rangle.
\end{equation}
Finally, the linearization of the volume element is given by 
\begin{equation}
\frac{d}{dt} dv_{g(t)}|_{t=0} = \frac{1}{2} (tr_gh)\;dv_g.
\end{equation}
\end{lemma}

Let $\phi, \varphi \in E_k(P_g)$ be arbitrary. To avoid computing the linearization of $P_g$ directly, we proceed as follows:
\[
\begin{split}
&\int_M \phi P_g'\varphi \; dv_g = \frac{d}{dt}\int_M \phi P_{g(t)}\varphi\; dv_{g(t)}|_{t=0} - \int_M \phi P_g \varphi \frac{d}{dt}dv_{g(t)}|_{t=0} \\ & =\int_M\frac{d}{dt}E_{g(t)}(\phi,\varphi)|_{t=0}\;dv_{g} + \int_ME_g(\phi,\varphi)\frac{1}{2}(tr_gh)\;dv_{g}  - \lambda_k(P_g)\int_M \phi\varphi\frac{1}{2}(tr_gh)\;dv_g,
\end{split}
\]
where Lemma \ref{linearization} has been used to compute the linearization of the volume element. Notice that we are done with the last two terms, both of which are symmetric in $\phi$ and $\varphi$.

Let us now focus on the term $\displaystyle\int_M\frac{d}{dt}E_{g(t)}(\phi,\varphi)|_{t=0}\;dv_{g}$. We start with
\[
\begin{split}
\int_M& \frac{d}{dt}(\Delta_{g(t)}\phi)(\Delta_{g(t)}\varphi)|_{t=0}\;dv_g = \int_M(\Delta_g'\phi) \Delta_g\varphi\;dv_g + \int_M(\Delta_g\phi) \Delta_g'\varphi\;dv_g\\ =& \int_M \left\{-\langle h, (\Delta_g\varphi)\nabla(d\phi)\rangle - \langle \delta h, (\Delta_g \varphi)d\phi\rangle +\frac{1}{2}\langle d(tr_gh),(\Delta_g\varphi)d\phi\rangle \right\}\;dv_g\\ &+\int_M \left\{-\langle h, (\Delta_g\phi)\nabla(d\varphi)\rangle - \langle \delta h, (\Delta_g \phi)d\varphi\rangle +\frac{1}{2}\langle d(tr_gh),(\Delta_g\phi)d\varphi\rangle \right\}\;dv_g \\=& \int_M \left\{-\langle h, (\Delta_g\varphi)\nabla (d\phi)\rangle + \langle h, \nabla((\Delta_g\varphi)d\phi)\rangle -\frac{1}{2}(tr_gh)\text{div}_g((\Delta_g\varphi) \nabla_g\phi)\right\}\;dv_g \\ &+ \int_M \left\{-\langle h, (\Delta_g\phi)\nabla (d\varphi)\rangle + \langle h, \nabla((\Delta_g\phi)d\varphi)\rangle -\frac{1}{2}(tr_gh)\text{div}_g((\Delta_g\phi) \nabla_g\varphi)\right\}\;dv_g \\=& \int_M \langle h, d(\Delta_g \varphi)\otimes d\phi + d(\Delta_g\phi)\otimes d\varphi - (\Delta_g \varphi)(\Delta_g\phi)g - \frac{1}{2}\langle \nabla_g\Delta_g\varphi, \nabla_g\phi\rangle g \rangle\;dv_g \\&-\frac{1}{2}\int_M\langle h, \langle \nabla_g\Delta_g\phi, \nabla_g\phi\rangle g\rangle \;dv_g,
\end{split}
\]
where we have used the identity $\nabla((\Delta_g\phi)d\varphi) = d(\Delta_g\phi)\otimes d\varphi + (\Delta_g\phi)\nabla(d\varphi)$ after applying the divergence theorem. For the term involving the scalar curvature, we have
\[
\begin{split}
\frac{2}{3}&\int_M\frac{d}{dt}R_{g(t)}\langle \nabla_{g(t)}\phi, \nabla_{g(t)}\varphi\rangle|_{t=0}\;dv_g \\ =&\frac{2}{3}\int_M \left\{-\Delta_g(tr_gh)+\delta^2h-\langle \text{Ric}_g, h\rangle \right\}  \langle\nabla_g\phi, \nabla_g\varphi\rangle\;dv_g\\ &-\frac{2}{3}\int_M R_g\langle h, d\phi\otimes d\varphi\rangle \;dv_g \\  =&\frac{2}{3}\int_M \langle h, \nabla(d \langle \nabla_g\phi,\nabla_g\varphi \rangle) - \Delta_g\langle \nabla_g\phi,\nabla_g\varphi \rangle g - \langle \nabla_g\phi,\nabla_g\varphi \rangle \text{Ric}_g\rangle\;dv_g\\ &-\frac{2}{3} \int_M\langle h, R_gd\phi\otimes d\varphi\rangle \;dv_g.
\end{split}
\]
We now deal with the term involving the Ricci curvature. First, notice that
\[
\text{Ric}_{g(t)}(\nabla_{g(t)}\phi, \nabla_{g(t)}\varphi) = (\text{Ric}_{g(t)})_{ab}g(t)^{ac}g(t)^{bd}\phi_c \varphi_d.
\]
Using $(g')^{ac} = -g^{ae}g^{cf}h_{ef}$, we then deduce
\[
\begin{split}
\frac{d}{dt}& \text{Ric}_{g(t)}(\nabla_{g(t)}\phi, \nabla_{g(t)}\varphi)|_{t=0} = (\text{Ric}_g')_{ab}g^{ac}g^{bd}\phi_c \varphi_d + (\text{Ric}_g)_{ab}(g')^{ac}g^{bd}\phi_c\varphi_d\\ &\hspace{1.65in}+ (\text{Ric}_g)_{ab}g^{ac}(g')^{bd}\phi_c\varphi_d\\=& \frac{1}{2}[-(\Delta h)_{ab} + \nabla_a(\delta h)_b + \nabla_b(\delta h)_a - \nabla_a\nabla_b(tr_gh)] g^{ac}g^{bd}\phi_c \varphi_d\\ &-g^{kp}g^{lq}R_{akbl}h_{pq}g^{ac}g^{bd}\phi_c \varphi_d+ \frac{1}{2} g^{sp}(\text{Ric}_g)_{ap} h_{bs}g^{ac}g^{bd}\phi_c \varphi_d\\ &+ \frac{1}{2} g^{sp}(\text{Ric}_g)_{bp} h_{as}g^{ac}g^{bd}\phi_c \varphi_d + (\text{Ric}_g)_{ab}(g')^{ac}g^{bd}\phi_c\varphi_d + (\text{Ric}_g)_{ab}g^{ac}(g')^{bd}\phi_c\varphi_d \\=& -\frac{1}{2}\langle \Delta h, d\phi\otimes d\varphi\rangle +\langle \nabla(\delta h), d\phi\otimes d\varphi\rangle -\frac{1}{2}\langle \nabla^2(tr_gh), d\phi \otimes d\varphi\rangle\\ &- \langle h, Rm(\nabla_g\phi, \cdot, \nabla_g\varphi, \cdot)\rangle +\frac{1}{2} \langle h, \text{Ric}_g(\nabla_g\phi,\cdot)\otimes d\varphi\rangle +\frac{1}{2} \langle h, \text{Ric}_g(\nabla_g\varphi,\cdot)\otimes d\phi\rangle \\ &-(\text{Ric}_g)_{ab}g^{ae}g^{cf}h_{ef}g^{bd}\phi_c\varphi_d - (\text{Ric}_g)_{ab}g^{ac}g^{be}g^{df}h_{ef}\phi_c\varphi_d \\ =& -\frac{1}{2}\langle \Delta h, d\phi\otimes d\varphi\rangle +\langle \nabla(\delta h), d\phi\otimes d\varphi\rangle -\frac{1}{2}\langle \nabla^2(tr_gh), d\phi \otimes d\varphi\rangle\\ &- \langle h, Rm(\nabla_g\phi, \cdot, \nabla_g\varphi, \cdot)\rangle - \frac{1}{2}\langle h, \text{Ric}_g(\nabla_g\phi,\cdot)\otimes d\varphi\rangle - \frac{1}{2}\langle h, \text{Ric}_g(\nabla_g\varphi,\cdot)\otimes d\phi\rangle.
\end{split}
\]
Therefore, 
\[
\begin{split}
-&2\int_M\frac{d}{dt} \text{Ric}_{g(t)}(\nabla_{g(t)}\phi, \nabla_{g(t)}\varphi)|_{t=0}\;dv_g \\ =& \int_M \langle h, \Delta(d\phi\otimes d\varphi) - 2\nabla(\delta(d\phi\otimes d\varphi)) + \delta^2(d\phi \otimes d\varphi) g \rangle \; dv_g \\&+\int_M \langle h, 2Rm(\nabla_g\phi,\cdot, \nabla_g \varphi, \cdot) + \text{Ric}_g(\nabla_g\phi,\cdot)\otimes d\varphi + \text{Ric}_g(\nabla_g\varphi,\cdot)\otimes d\phi\rangle\;dv_g.
\end{split}
\]
Putting everything together, we have that $\displaystyle\int_M \phi P_g' \varphi \; dv_g$ equals 
\[
\begin{split}
&\int_M \langle h, d(\Delta_g \varphi)\otimes d\phi + d(\Delta_g\phi)\otimes d\varphi - (\Delta_g \varphi)(\Delta_g\phi)g - \frac{1}{2}\langle \nabla_g\Delta_g\varphi, \nabla_g\phi\rangle g \rangle\;dv_g \\&-\frac{1}{2}\int_M\langle h, \langle \nabla_g\Delta_g\phi, \nabla_g\phi\rangle g\rangle \;dv_g \\  &+\frac{2}{3}\int_M \langle h, \nabla(d \langle \nabla_g\phi,\nabla_g\varphi \rangle) - \Delta_g\langle \nabla_g\phi,\nabla_g\varphi \rangle g - \langle \nabla_g\phi,\nabla_g\varphi \rangle \text{Ric}_g\rangle\;dv_g\\ &-\frac{2}{3} \int_M\langle h, R_gd\phi\otimes d\varphi\rangle \;dv_g\\&+ \int_M \langle h, \Delta(d\phi\otimes d\varphi) - 2\nabla(\delta(d\phi\otimes d\varphi)) + \delta^2(d\phi \otimes d\varphi) g \rangle \; dv_g \\&+\int_M \langle h, 2Rm(\nabla_g\phi,\cdot, \nabla_g \varphi, \cdot) + \text{Ric}_g(\nabla_g\phi,\cdot)\otimes d\varphi + \text{Ric}_g(\nabla_g\varphi,\cdot)\otimes d\phi\rangle\;dv_g \\&+\frac{1}{2}\int_M \langle h, E_g(\phi,\varphi)g - \lambda_k(P_g)\phi\varphi g\rangle \;dv_g.
\end{split}
\]
This shows that $\displaystyle\int_M\phi P_g'\varphi\;dv_g$ defines a quadratic symmetric form on $E_k(P_g)$. The associated quadratic form $Q_h(\phi)$ is then given by 
\[
\begin{split}
Q_h(\phi) =& \int_M \langle h, 2d(\Delta_g \phi)\otimes d\phi - (\Delta_g\phi)^2g-\langle \nabla_g\Delta_g\phi, \nabla_g\phi\rangle g\rangle \;dv_g \\ &+ \frac{2}{3}\int_M \langle h, \nabla (d|\nabla_g\phi|^2) - \Delta_g|\nabla_g\phi|^2g-|\nabla_g\phi|^2\text{Ric}_g - R_gd\phi\otimes d\phi\rangle \;dv_g \\ &+ \int_M\langle h, \Delta(d\phi\otimes d\phi) -2\nabla(\delta(d\phi \otimes d\phi)) + \delta^2(d\phi\otimes d\phi) g\rangle\;dv_g \\ &+\int_M \langle h, 2Rm(\nabla_g\phi, \cdot,\nabla_g\phi,\cdot)+ 2\text{Ric}_g(\nabla_g\phi,\cdot)\otimes d\phi\rangle\;dv_g \\ &+\frac{1}{2}\int_M\langle h, E_g(\phi,\phi)g - \lambda_k(P_g)\phi^2 g\rangle dv_g,
\end{split}
\]
as desired. 

\noindent{\bf Step 2:} We prove that the eigenvalues of $\Pi\circ P_g'$ are $\{\Lambda_1'(0),\cdots,\Lambda_m'(0)\}$. 

The proof is similar to that of Proposition \ref{OSD}, but here the perturbation is not necessarily conformal. Starting from 
\begin{equation}
P_g(t)\phi_i(t)= \Lambda_i(t)\phi_i(t),
\end{equation}
we differentiate to obtain
\begin{equation}
P_g'\phi_i + P_g\phi_i' = \Lambda_i'(0)\phi_i + \lambda_k(P_g)\phi_i',
\end{equation}
where $\phi_i = \phi_i(0)$ and $\phi_i' = \phi_i'(0)$. Multiplying across by $\phi_j$ and subtracting $\phi_jP_g\phi_i' = \lambda_k(P_g)\phi_j\phi_i'$, we deduce
\begin{equation}
\begin{split}
\int_M(\phi_jP_g'\phi_i + \phi_jP_g\phi_i' - \phi_jP_g\phi_i')\; dv_g &= \Lambda_i'(0)\delta_{ij} + \lambda_k(P_g)\int_M \phi_j\phi_i'\;dv_g \\ &\hspace{.65in}- \lambda_k(P_g)\int_M \phi_j\phi_i'\;dv_g.
\end{split}
\end{equation}
Using that $P_g$ is a self-adjoint operator, we conclude 
\begin{equation}
\int_M\phi_jP_g'\phi_i\;dv_g = \Lambda_i'(0)\delta_{ij}.
\end{equation}
Since $\{\phi_i\}_{i=1}^m$ is a basis for $E_k(P_g)$, the proof follows.  

\noindent{\bf Step 3:} We prove that $tr_g(\tau_g(\phi)) = \langle g, \tau_g(\phi)\rangle = 0$. In order to complete this step, we need the following lemma:

\begin{lemma}\label{DoubleDiv}
We have the following identity for the double-divergence acting on the symmetric two tensor $d\phi\otimes d\phi$:
\begin{equation}\label{DoubleDiv1}
\begin{split}
\delta^2(d\phi\otimes d\phi) & = 2\langle \nabla_g \Delta_g\phi, \nabla_g \phi\rangle + \text{Ric}_g(\nabla_g\phi, \nabla_g\phi) + |\nabla_g^2\phi|^2+ (\Delta_g\phi)^2\\& = \Delta_g|\nabla_g\phi|^2 -|\nabla^2_g\phi|^2-\text{Ric}_g(\nabla_g\phi,\nabla_g\phi) + (\Delta_g\phi)^2. 
\end{split}
\end{equation}
\end{lemma}
\noindent See Appendix \ref{Appendix} for the proof of Lemma \ref{DoubleDiv}.

Using Lemma \ref{DoubleDiv} and Bochner's formula, we compute
\[
\begin{split}
tr_g(\tau_g(\phi)) =& -2\langle \nabla_g\Delta_g\phi, \nabla_g\phi\rangle +4(\Delta_g\phi)^2 +4\langle \nabla_g\Delta_g \phi, \nabla_g \phi\rangle - \frac{2}{3}\Delta_g|\nabla_g\phi|^2 \\ &+\frac{8}{3}\Delta_g|\nabla_g\phi|^2 + \frac{2}{3}|\nabla_g\phi|^2R_g +\frac{2}{3}R_g|\nabla_g\phi|^2 - \Delta_g|\nabla_g\phi|^2 \\ &+2\delta^2(d\phi\otimes d\phi) - 4\delta^2(d\phi\otimes d\phi) - 2\text{Ric}_g(\nabla_g\phi,\nabla_g\phi) \\&- 2\text{Ric}_g(\nabla_g\phi, \nabla_g \phi)-2E_g(\phi,\phi)\\ =& -2\langle \nabla_g\Delta_g\phi,\nabla_g\phi\rangle +2(\Delta_g\phi)^2+\frac{4}{3}R_g|\nabla_g\phi|^2 -6\text{Ric}_g(\nabla_g\phi,\nabla_g\phi)\\&- 2E_g(\phi,\phi) - 2|\nabla_g^2\phi|^2 + \Delta_g|\nabla_g\phi|^2\\ =& -2\langle\nabla_g\Delta_g,\nabla_g\phi\rangle - 2\text{Ric}_g(\nabla_g\phi, \nabla_g\phi) - 2|\nabla_g^2\phi|^2 + \Delta_g|\nabla_g\phi|^2\\=&0.
\end{split}
\]
\end{proof}

\begin{remark}\label{Gen-to-Conf}
Equation (\ref{AnalyticEigen}) in Proposition \ref{OSD} can be derived as a consequence of Proposition \ref{E-OSD}. Indeed, if $g(t)=e^{2w_t}g$ is an analytic conformal perturbation of $g$, then $h=2\frac{d}{dt}w_t|_{t=0}g = 2 w'\cdot g$, and so
\begin{equation}
\begin{split}
\Lambda_i'(0) &= \int_M \phi_iP_g'\phi_i\;dv_g = -\int_M \left\langle h, \tau_g(\phi_i) + \frac{1}{2}\lambda_k(P_g)\phi_i^2g\right\rangle\;dv_g \\&= -2\int_Mw'(tr_g(\tau_g(\phi_i)) + 2\lambda_k(P_g)\phi_i^2)\;dv_g\\ &= -4\lambda_k(P_g)\int_Mw'\phi_i^2\;dv_g.
\end{split}
\end{equation}
This concludes the proof of the claim since $\alpha = 4w'$.
\end{remark}

\begin{proposition}\label{E-nec}
Let $g\in \mathcal{R}(M^4)$ be a Riemannian metric for which $\lambda_k(P_g)\not = 0$ and for which either $\lambda_k(P_g)>\lambda_{k-1}(P_g)$ or $\lambda_k(P_g)<\lambda_{k+1}(P_g)$. If the metric $g$ is extremal for the normalized $k$-th eigenvalue functional, then there exists  a collection $\{\phi_1,\cdots, \phi_p\}\subseteq E_k(P_g)$ such that 
\begin{equation}\label{E-nec1}
\sum_{i=1}^p\tau_g(\phi_i) =0
\end{equation}
and 
\begin{equation}\label{E-nec2}
\sum_{i=1}^p\phi^2=\frac{2}{|\lambda_k(P_g)|}.
\end{equation}
\end{proposition}

\begin{proof}
The proof of (\ref{E-nec1}) follows the arguments of \cite{Soufi,Soufi2,Fraser}, and it is similar to the proof of Theorem \ref{Main1} in Section \ref{ProofMain1}. We include the main steps. Consider the convex hull $K$ in $L^2(S^2(T^*M))$ of the set 
\begin{equation}
\left\{\tau_g(\phi)+\frac{1}{2}\lambda_k(P_g)\phi^2g: \phi\in E_k(P_g)\text{ and }\|\phi\|_{L^2(M,g)}=1\right\}.
\end{equation}
We claim that $\text{sign}(\lambda_k(P_g))g\in K$. 

We proceed by contradiction. If $\text{sign}(\lambda_k(P_g))g\not\in K$, then we use Hahn-Banach separation theorem to obtain an $h\in L^2(S^2(T^*M))$, which we can assume is smooth, such that 
\begin{equation}\label{E-nec3}
\text{sign}(\lambda_k(P_g))\int_M\langle h,g\rangle\;dv_g>0
\end{equation}
and 
\begin{equation}\label{E-nec4}
\int_M \left\langle h, \tau_g(\phi)+\frac{1}{2}\lambda_k(P_g)\phi^2g\right\rangle \; dv_g<0 
\end{equation} 
for all nonzero $\phi\in E_k(P_g)$. Set 
\begin{equation}
\bar h = h - (4\text{Vol}(M^4,g))^{-1} \int_M \langle h,g\rangle\;dv_g\cdot g.
\end{equation}
Then, similar to our choice in (\ref{choice-perturbation}), we can find a volume-preserving perturbation $g(t)$ of $g$ with $\bar h = \frac{d}{dt}g(t)|_{t=0}$. Recall that, by assumption, the metric $g$ is extremal, and thus $Q_{\bar h}$ must be indefinite on $E_k(P_g)$ in light of Proposition \ref{E-OSD} and Proposition \ref{E-quadraticform}. However, using (\ref{E-nec3}), (\ref{E-nec4}) and that $tr_g(\tau_g(\phi))=0$ for any $\phi\in E_k(P_g)$, we have
\begin{equation}
\begin{split}
Q_{\bar h}(\phi) =& -\int_M \left\langle \bar h, \tau_g(\phi) +\frac{1}{2}\lambda_k(P_g)\phi^2g\right\rangle\;dv_g \\=& -\int_M\left\langle h, \tau_g(\phi) +\frac{1}{2}\lambda_k(P_g)\phi^2g\right\rangle\;dv_g \\ &+ \frac{1}{2\text{Vol}(M^4,g)}|\lambda_k(P_g)|\int_M\phi^2\;dv_g\cdot \text{sign}(\lambda_k(P_g)) \int_M\langle h,g\rangle\;dv_g \\ &>0
\end{split}
\end{equation}
for all $\phi\in E_k(P_g)$. This means that $Q_{\bar h}$ is indefinite, which is a contradiction. 

Now, $\text{sign}(\lambda_k(P_g))g\in K$ means that there exists a collection $\{\phi_1\cdots,\phi_p\}\subseteq E_k(P_g)$ such that 
\begin{equation}
\sum_{i=1}^p\left(\tau_g(\phi_i) +\frac{1}{2}\lambda_k(P_g)\phi_i^2 g\right) = \text{sign}(\lambda_k(P_g))g.
\end{equation}
Taking the trace on both sides yields (\ref{E-nec2}), and so $\sum_{i=1}^p\tau_g(\phi_i)=0$.
\end{proof}

\begin{proposition}\label{E-suf}
Let $g\in \mathcal{R}(M^4)$ be a Riemannian metric for which $\lambda_k(P_g)\not = 0$ and for which either $\lambda_k(P_g)>\lambda_{k-1}(P_g)$ or $\lambda_k(P_g)<\lambda_{k+1}(P_g)$. If there exists a collection $\{\phi_1,\cdots, \phi_p\}\subseteq E_k(P_g)$ such that $\sum_{i=1}^p\phi_i^2$ is constant and $\sum_{i=1}^p\tau_g(\phi_i) = 0$,
then $g$ is extremal for the normalized $k$-th eigenvalue functional. 
\end{proposition}

\begin{proof}
Take any analytic perturbation $g(t)\in \mathcal{R}(M^4)$ of $g$ which preserves volume, and set $h=\frac{d}{dt}g(t)|_{t=0}\in S^2(T^*M)$. Since $g(t)$ is volume-preserving, we have
\begin{equation}
0=\frac{d}{dt}\text{Vol}(M^4,g(t))|_{t=0} = \frac{1}{2}\int_M tr_gh\;dv_g.
\end{equation}
Therefore, 
\begin{equation}
\begin{split}
\sum_{i=1}^p Q_h(\phi_i) &= -\sum_{i=1}^p\int_M\left\langle h,\tau_g(\phi_i)+\frac{1}{2}\lambda_k(P_g)\phi_i^2g\right\rangle\;dv_g\\ &= -\frac{1}{2}\lambda_k(P_g)\sum_{i=1}^p\phi_i^2\int_Mtr_gh\;dv_g = 0.
\end{split}
\end{equation}
Since $Q_h(\phi_i)=\Lambda_i'(0)$ by (\ref{E-quadraticform1}) in Proposition \ref{E-quadraticform}, we conclude that $Q_h$ is indefinite in $E_k(P_g)$ for each volume-preserving perturbation of $g$. By Proposition \ref{E-OSD}, specifically by formulas (\ref{OSD+1}), (\ref{OSD-1}), (\ref{OSD+2}), and (\ref{OSD-2}), this means precisely that $g$ is extremal for the normalized $k$-th eigenvalue functional. 
\end{proof}

\begin{remark}
In the case of closed surfaces for laplacian eigenvalues, the condition $\sum_{i=1}^pd\phi_i\otimes d\phi_i = g$ (see section \ref{E-MainResults}) alone implies that $\sum_{i=1}^p\phi_i^2$ is constant, and so we have a map into a sphere; see proof of Lemma 3.1 in \cite{Soufi2}. By further studying the condition $\sum_{i=1}^p\tau_g(\phi_i)=0$, we expect to be able to drop this assumption in the hypothesis of Proposition \ref{E-suf}.
\end{remark}

\section{Appendix: Proof of Lemma \ref{DoubleDiv}}\label{Appendix}

The double divergence of a symmetric two tensor $h\in S^2(T^*M)$ is defined as
\[
\delta^2h = \nabla^a\nabla^bh_{ab} = g^{ac}g^{bd}\nabla_c\nabla_d h_{ab}.
\]
We are interested in the particular case where $h=d\phi\otimes d\phi$. First, let us recall the following identity known as Ricci identity:
\[
\nabla_c\nabla_d(d\phi)_a - \nabla_d\nabla_c (d\phi)_a =R_{cdae}g^{me}(d\phi)_m
\]
Using Ricci identity, we compute as follows:
\[
\begin{split}
\delta^2(d\phi\otimes d\phi) =& g^{ac}g^{bd}\nabla_c\nabla_d(d\phi\otimes d\phi)_{ab}\\ =& g^{ac} g^{bd} \nabla_c(\nabla_d(d\phi)\otimes d\phi + d\phi \otimes \nabla_d(d\phi))_{ab} \\ =& g^{ac} g^{bd} (\nabla_c\nabla_d(d\phi)\otimes d\phi + \nabla_d(d\phi)\otimes \nabla_c(d\phi)+ \nabla_c (d\phi)\otimes\nabla_d(d\phi)  \\&+d\phi\otimes \nabla_c\nabla_d(d\phi))_{ab}\\ =& g^{ac}g^{bd} \nabla_c\nabla_d(d\phi)_a\phi_b +g^{ac}g^{bd}\nabla_d(d\phi)_a\nabla_c(d\phi)_b\\& + g^{ac}g^{bd} \nabla_c(d\phi)_a \nabla_d(d\phi)_b g^{ac}g^{bd} \phi_a\nabla_c\nabla_d(d\phi)_b \\ =&g^{ac}g^{bd}\nabla_d\nabla_c(d\phi)_a\phi_b + g^{ac}g^{bd}R_{cdae}(d\phi)_mg^{me}\phi_b + |\nabla_g^2\phi|^2 \\ &+(\Delta_g\phi)^2+g^{ac}\phi_a\nabla_c(\Delta_g\phi) \\ =& 2\langle\nabla_g\Delta_g \phi, \nabla_g\phi\rangle + (\text{Ric}_g)_{de}g^{bd}g^{me}\phi_m\phi_b + |\nabla_g^2\phi|^2+ (\Delta_g\phi)^2 \\ =& 2\langle \nabla_g \Delta_g\phi, \nabla_g \phi\rangle + \text{Ric}_g(\nabla_g\phi, \nabla_g\phi) + |\nabla_g^2\phi|^2+ (\Delta_g\phi)^2.
\end{split}
\]
This proves the first equality. The second equality follows from Bochner's formula.

\newpage

\end{document}